\theoremstyle{plain}
\newtheorem{theorem}{Theorem}[section]
\newtheorem{lemma}[theorem]{Lemma}
\newtheorem{proposition}[theorem]{Proposition}
\theoremstyle{definition}
\newtheorem{remark}[theorem]{Remark}
\newtheorem{example}[theorem]{Example}
\newcommand{\R}{\mathbb{R}}
\newcommand{\Z}{\mathbb{Z}}
\newcommand{\N}{\mathbb{N}}
\renewcommand{\phi}{\varphi}
\renewcommand{\kappa}{\varkappa}
\newcommand{\cC}{\mathcal{C}}
\begin{document}
\title[Gelfond exponent of weighted Thu-Morse sequence]{A rigorous computer aided estimation for Gelfond exponent of weighted Thue-Morse sequences}

%

%

\author{Yiwei Zhang, Ke Yin}
\address[Yiwei Zhang, Ke Yin]{School of Mathematics and Statistics\\
  Center for Mathematical Science\\
  Hubei Key Laboratory of Engineering Modeling and Scientific Computing\\
  Huazhong University of Science and Technology\\
  1037 Luoyu Road, Wuhan, China 430074}
\email[Yiwei Zhang]{yiweizhang831129@gmail.com}

\author{Wanquan Wu}
\address[Wanquan Wu]{School of Computer Science and Technology\\
  Huazhong University of Science and Technology \\
  1037 Luoyu Road, Wuhan, China 430074}


\thanks{We would like to thank Prof. Aihua Fan for invaluable discussions, particularly for his kindness for introducing us in detail about the definition and the background of Gelfond exponent; and its relationships with ergodic optimization problems, i.e., Theorem \ref{theo_gelfondandergodic} and the proof stated in Section \ref{sec_gelfondergodic}.  Mathematical materials for the background in Section \ref{sec_introduction} and Theorem \ref{theo_gelfondandergodic} in Section \ref{sec_gelfondergodic} are contributed by Prof. Aihua Fan, subject to our own understanding on writing.}

\begin{abstract}
In this paper, we will provide a mathematically rigorous computer aided estimation for the exact values and robustness for Gelfond exponent of weighted Thue-Morse sequences. This result improves previous discussions on Gelfond exponent by Gelfond, Devenport, Mauduit, Rivat, S\'{a}rk\"{o}zy and Fan et. al.
\end{abstract}
\maketitle
\section{Introduction}\label{sec_introduction}

The weighted $(c)$-Thue-Morse sequence was first introduced in \cite{Fan17}, and is among the simplest and typical multiplicative sequence, and attracts great interest from various mathematical and computational sciences.  For every real number $c\in[0,1)$, \emph{the weighted $(c)$-Thue-Morse sequence} is described by the formula
$$
t^{(c)}(n):=e^{2\pi i cs(n)},~~\forall n\in \N,
$$
where $s(n)$ is the sum of digits of $n$ based 2. In particular, $t(n):=t^{(1/2)}(n)$ is the classical Thue-Morse sequence. As a time series, this sequence is hybrid, in the sense that: on one hand, the subward complexity grows linearly, while on the other hand, there are various ways in which it can be construed as pseudorandom.

One of the studies on characterizing the pseudorandomness of the weighted Thue-Morse sequence is the study on its \emph{Gelfond type of oscillations}. To be more precise, that is to study the existence of a constant $\alpha^{(c)}\in (1/2,1)$, such that
\begin{equation}\label{equ_Gelfond type oscillations}
    \max_{0\leq t\leq 1}\left|\sum_{n=0}^{N-1}t^{(c)}(n)e^{2\pi int}\right|=O(N^{\alpha^{(c)}}).
\end{equation}
If $\alpha^{(c)}$ exists, then we say $t^{(c)}(n)$ is of \emph{Gelfond type} and the smallest $\alpha^{(c)}$ will be denoted by \emph{Gelfond exponent} $\Delta^{(c)}$.

Gelfond in \cite{Gelfond68} showed that $\Delta^{(1/2)}=\frac{\log 3}{\log4}$. Using this result, Mauduit and S\'{a}rk\"{o}zy \cite{MS98} showed that the classical Thue-Morse sequence is highly uniformly distributed, in the sense that for positive integers $a,b,M,N$ with $a(M-1)+b<N$, then
$$
\sum_{n=0}^{M-1}t(an+b)=O(N^{\log3/\log4}).
$$
On the other hand, Fan \cite{Fan16,Fan17} used Gelfond exponent to give a quantitative estimation on the growth size of the weighted Birkhoff ergodic sum. That is, for a measure preserving map $(X,\mathcal{B},\nu,T)$, then for every $f\in L^{2}(\nu)$, and every $\delta>0$, we have
$$
\nu\mbox{-a.e.-}x,~~~\sum_{n=0}^{N-1}t^{(c)}(n)f(T^{n}(x))=O(N^{\Delta^{(c)}}\log^{2}N\log^{1+\delta}\log N).
$$
Such estimation is usually treated as a probabilistic comparison on the weighted Birkhoff ergodic sum for the orthogonality relationships between topological oscillations of the sequences and zero topological entropy or uniquely ergodic dynamical systems \cite{Fan16,Fan17,FJ17}. For example, Sarnak's Conjecture for the M\"{o}bius sequence \cite{Sarnak09} and Wiener-Winter theory \cite{Fan172,WW41}.

Mauduit, Rivat and S\'{a}rk\"{o}zy \cite{MRS16} recently gave an elegant proof showing that every $t^{(c)}(n)$ is of Gelfond type, and moreover, they show that
$$
\Delta^{(c)}\leq 1-\frac{\pi^2}{20\log2}||c||^{2},~~\mbox{where}~~||c||:=\inf_{z\in\Z}|c-z|.
$$
Unfortunately, this estimate is not optimal. In addition, Konieczny \cite{Konieczny17} linked the study on Gelfond bound with the Gowers uniformity norm, but their expression on Gelfond bound is also implicit. Recently, we also remark that Fan,Shen \cite{FS} developed Davenport's idea and gave exact values of Gelfond exponent for several other special parameters including $c=1/4,3/4$. They ask whether there exists a universal method to estimate the exact value of Gelfond exponent for arbitrary $c\in[0,1)$.

In this paper, we would like to develop a computational-aided estimation on the exact value of the Gelfond exponent for general $c$-Thue Morse sequences. In particular, our approach enables us to test $c\in \Lambda_{L}=\{ \frac{i}{2^L}: L\in\N \text{ and } 0\leq i\leq 2^{L}-1\}$. For example, when $L=10$, and letting
$$
 \Lambda_{10}^{U}= \left\{\frac{192}{1024},\frac{309}{1024},\frac{390}{1024},\frac{391}{1024},\frac{832}{1024},\frac{715}{1024},\frac{634}{1024},\frac{633}{1024}\right\},
$$
we can
\begin{itemize}
  \item estimate the accurate value of $\Delta^{(c)}$ for every $c\in \Lambda_{10}\backslash\Lambda_{10}^{U};$
  \item show that the Gelfond exponent function $c\to \Delta^{(c)}$ is real analytic for every $c\in\Lambda_{10}\backslash\Lambda_{10}^{U}$.
\end{itemize}
The graph of the Gelfond exponent function is illustrated in Figure \ref{fig:butterfly}, and the explanations of the above two assertions will be given in Section \ref{sec_output}.

\begin{figure}
\includegraphics[width=0.8\textwidth]{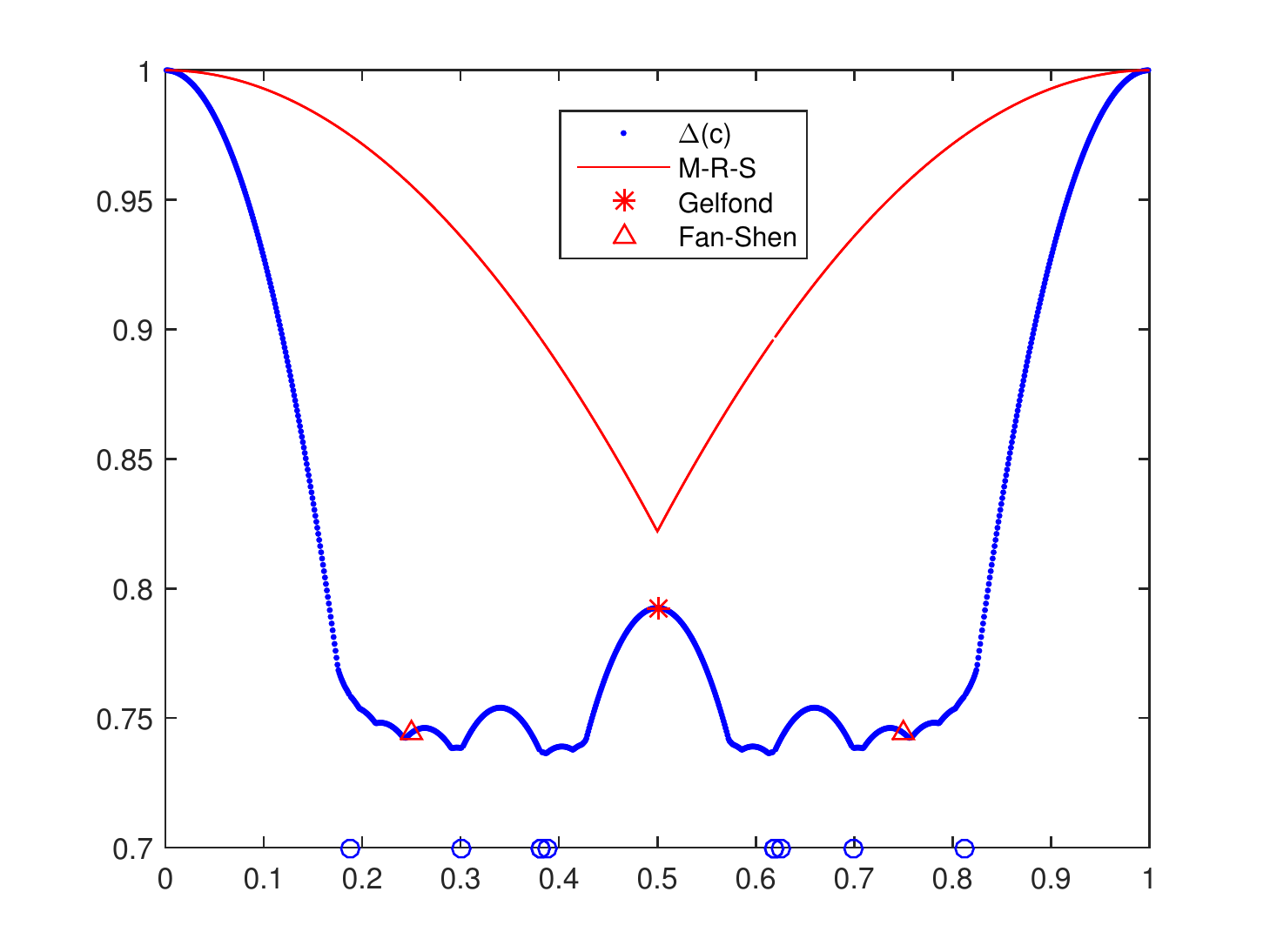}
\caption{The blue dot line is our estimation for the exact value of Gelfond exponent. This dot line is strictly smaller than the bound obtained by Mauduit, Rivat and S\'{a}rk\"{o}zy for any $c\in[0,1)$ (as drawn by the normal red line), and coincides with Gelfond's estimation at $c=1/2$(the red star), and Fan-Shen's estimation at $c=1/4,3/4$ (the red triangles). Followed by our estimation, it is observed that, differing to the Mauduit, Rivat and S\'{a}rk\"{o}zy bound curve, the Gelfond exponent function $c\to \Delta^{(c)}$ is symmetric at point $1/2$, but is not globally concave at each half. Instead, the graph of such function seems like a zigzag curve with many ``bubbles''(of various sizes fluctuations), and many real analytic points. The points in $\Lambda_{10}^{U}$ are drawn as the blue circles.}
\label{fig:butterfly}
\end{figure}

Our methodology contains a combination of two ingredients. Firstly, we state an equivalence between the estimation of the exact value of Gelfond exponent and an ergodic optimization problem for the doubling map with the upper semi-continuous $c$-parameterized potential formalised by $\log\cos\pi(x+c)$, see Theorem \ref{theo_gelfondandergodic} (the latter is to find the invariant measure maximizing the integral of the potential). Secondly, we will use the method developed in \cite{BZ16} to reduce the ergodic optimization problem to a finite dimensional combinatorics problem (i.e., maximizing mean cycle problem) on a sequence of edge weighted De Brujin graphs see Theorem \ref{theo_gapcondtion}. Based on these two ingredients, we implement an algorithm (i.e., Algorithm \ref{alg:gap_criterion}) to find the unique maximizing periodic invariant measure, and thus estimate the exact value of the Gelfond exponent. 
Our computational method provides a rigorous proof of the existence of $\Delta^{(c)}$ by checking an inequality \eqref{eq:gap_inequality} for a finite number of cases using a unified algorithm that always terminates in definite number of steps. The number of cases checked depends on given computational resources and machine error, and our case study for the parameters in $\Lambda_{10}$, shows that most of the values $c$ can be successfully checked within a few cases, while verifying for the rest of $c$ is beyond out computational power. As far as we are concerned, this is the first attempt to investigate the Gelfond exponent via combinatorial and computer aided approach.

The paper is organized as follows. We will first explain the mathematical principles of our Algorithm \ref{alg:gap_criterion} in Section \ref{sec_gelfondergodic} and Section \ref{Sec_combinatorial}. The implementation of Algorithm \ref{alg:gap_criterion} is provided in Section \ref{Sec_verification}, and the outputs, as well as some discussions for the further studies are provided in Section \ref{sec_output}. Some supplementaries on Howard algorithm are provided in the Appendix \ref{sec_appendix} for the convenience of readers.

\section{Gelfond exponent and ergodic optimization}\label{sec_gelfondergodic}

This section is devoted to describing the equivalence between the estimation of the Gelfond exponent and an ergodic optimization problem. Before stating the main result, let us recall some basic notions from ergodic optimization theory. Let $(X,d)$ be a compact metric space, and $f \colon X \to X$ be a continuous transformation.
Let $\mbox{Inv}(f)$ be the set of all $f$-invariant probability measures.
If $\mu \in \mbox{Inv}(f)$ is supported on a periodic orbit, then it is called a \emph{periodic measure}.

Given an upper semi-continuous potential function $\varphi \colon X \to \R$,
the \emph{ergodic supremum} of $f$ is defined by
$$
\mbox{ergsup} (f,\varphi) := \sup_{\mu \in Inv(f)} \int \varphi d\mu \, ,
$$
If the $\sup$ is attained at a $\mu \in \mbox{Inv}(f)$ then we say that
the measure $\mu$ is \emph{maximizing} for~$f$. Such maximizing measures always exist, due to the compactness of $X$, and the semi-continuity of $\varphi$.

The following two propositions will be useful later on.
\begin{proposition}\cite[Prop2.2]{Je00}\label{prop_ergequivalence}
\begin{eqnarray}
  \mbox{ergsup}(f,\varphi) &=& \sup_{x\in X}\limsup_{n\to\infty}\frac{1}{n}\sum_{i=0}^{n-1}\varphi\circ f^{i}(x) \\
   &=& \limsup_{n\to\infty}\frac{1}{n}\sup_{x\in X}\sum_{i=0}^{n-1}\varphi\circ f^{i}(x)<+\infty.
\end{eqnarray}
\end{proposition}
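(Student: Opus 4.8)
The plan is to prove the double equality by establishing the cyclic chain of inequalities $\mathrm{ergsup}(f,\varphi) \le B \le C \le \mathrm{ergsup}(f,\varphi)$, where I abbreviate $B := \sup_{x\in X}\limsup_{n}\frac1n S_n\varphi(x)$ and $C := \limsup_{n}\frac1n \sup_{x\in X} S_n\varphi(x)$, writing $S_n\varphi(x) := \sum_{i=0}^{n-1}\varphi(f^i x)$ for the Birkhoff sum. Finiteness comes first and for free: since $X$ is compact and $\varphi$ is upper semi-continuous it attains a finite maximum $M$, so $\frac1n S_n\varphi \le M$ uniformly in $x$ and $n$, which forces each of the three quantities to be bounded above by $M<+\infty$.

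For $\mathrm{ergsup}(f,\varphi)\le B$, I fix $\mu\in\mathrm{Inv}(f)$ with $\int\varphi\,d\mu>-\infty$ (measures with integral $-\infty$ do not affect the supremum). Then $\varphi\in L^1(\mu)$, being bounded above and integrable below, so Birkhoff's ergodic theorem furnishes a function $\bar\varphi$ with $\frac1n S_n\varphi(x)\to\bar\varphi(x)$ for $\mu$-a.e.\ $x$ and $\int\bar\varphi\,d\mu=\int\varphi\,d\mu$. Because the $\mu$-mean of $\bar\varphi$ equals $\int\varphi\,d\mu$, there must exist a point $x$ with $\bar\varphi(x)\ge\int\varphi\,d\mu$, and at such $x$ we get $\limsup_{n}\frac1n S_n\varphi(x)=\bar\varphi(x)\ge\int\varphi\,d\mu$; taking the supremum over $\mu$ yields the claim. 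The inequality $B\le C$ is immediate: for every fixed $x$ one has $\frac1n S_n\varphi(x)\le\frac1n\sup_{y} S_n\varphi(y)$, so $\limsup_{n}\frac1n S_n\varphi(x)\le C$ for each $x$, and the supremum over $x$ gives $B\le C$.

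The main work, and the step I expect to be the genuine obstacle, is $C\le\mathrm{ergsup}(f,\varphi)$. First I observe that $S_n\varphi$ is itself upper semi-continuous (a finite sum of u.s.c.\ functions precomposed with the continuous maps $f^i$) on the compact space $X$, so the supremum $b_n:=\sup_{x} S_n\varphi(x)$ is attained at some $x_n$. I choose a subsequence $(n_k)$ with $\frac1{n_k}b_{n_k}\to C$ and form the empirical measures $\mu_k:=\frac1{n_k}\sum_{i=0}^{n_k-1}\delta_{f^i(x_{n_k})}$, so that $\int\varphi\,d\mu_k=\frac1{n_k}b_{n_k}$. By weak-$*$ compactness of the space of probability measures on $X$, I pass to a further subsequence along which $\mu_k\to\mu$ weakly. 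A Krylov--Bogolyubov style computation shows $\mu$ is $f$-invariant: for any continuous $g$, $\int g\circ f\,d\mu_k-\int g\,d\mu_k=\frac1{n_k}\bigl(g(f^{n_k}x_{n_k})-g(x_{n_k})\bigr)\to 0$, and passing to the limit gives $\int g\circ f\,d\mu=\int g\,d\mu$. The crucial and delicate point is that upper semi-continuity of $\varphi$ cuts in exactly the favorable direction: for a u.s.c.\ function bounded above and $\mu_k\to\mu$ weakly one has $\limsup_{k}\int\varphi\,d\mu_k\le\int\varphi\,d\mu$, which I would justify by approximating $\varphi$ from above by a decreasing sequence of continuous functions and invoking monotone convergence. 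Consequently $\int\varphi\,d\mu\ge\lim_{k}\frac1{n_k}b_{n_k}=C$, and since $\mu\in\mathrm{Inv}(f)$ this yields $\mathrm{ergsup}(f,\varphi)\ge C$. Chaining the three inequalities closes the loop and forces equality throughout; the finiteness established at the outset completes the statement.
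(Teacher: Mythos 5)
Your proof is correct: the finiteness bound, the Birkhoff-theorem argument for $\mathrm{ergsup}\le B$, the trivial inequality $B\le C$, and the Krylov--Bogolyubov empirical-measure argument combined with the upper-semicontinuity portmanteau inequality for $C\le\mathrm{ergsup}$ are all sound. The paper itself gives no proof of this proposition---it is quoted directly from Jenkinson's survey \cite{Je00}---and your argument is essentially the standard one found there, so there is nothing further to compare.
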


\medskip

\begin{proposition}\label{lem_monotonemaximizing}
  Suppose two upper semi-continuous potential functions $\varphi_{1},\varphi_{2}:X\to \R$ satisfies:
\begin{itemize}
  \item $\varphi_{1}\leq \varphi_{2};$
  \item $\mu$ is the unique maximizing measure for $\varphi_{2}$, and
  $$
  \varphi_{1}(x)=\varphi_{2}(x),~~~\forall x\in\mbox{supp}(\mu),
  $$
\end{itemize}
then $\mu$ is also the unique maximizing measure for $\varphi_{1}$.
\end{proposition}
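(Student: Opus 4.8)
The plan is to first verify that $\mu$ maximizes $\varphi_1$ and that the two optimization problems share the same ergodic supremum, and then to promote the uniqueness from $\varphi_2$ to $\varphi_1$ by a squeezing argument. No new machinery is needed beyond the definitions recalled above.

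The starting observation is that $\mu$ charges only its support, i.e. $\mu(X\setminus\mbox{supp}(\mu))=0$. Since $\varphi_1$ and $\varphi_2$ coincide on $\mbox{supp}(\mu)$, this immediately gives
$$\int \varphi_1\,d\mu = \int_{\mbox{supp}(\mu)}\varphi_1\,d\mu = \int_{\mbox{supp}(\mu)}\varphi_2\,d\mu = \int \varphi_2\,d\mu = \mbox{ergsup}(f,\varphi_2),$$
the last equality holding because $\mu$ is maximizing for $\varphi_2$. On the other hand, for an arbitrary $\nu\in\mbox{Inv}(f)$ the hypothesis $\varphi_1\le\varphi_2$ yields $\int\varphi_1\,d\nu\le\int\varphi_2\,d\nu\le\mbox{ergsup}(f,\varphi_2)$. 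Combining these two facts shows that $\mu$ attains the supremum defining $\mbox{ergsup}(f,\varphi_1)$ and, as a by-product, that $\mbox{ergsup}(f,\varphi_1)=\mbox{ergsup}(f,\varphi_2)$; I will record this common value as $m$.

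For uniqueness I would take any $\varphi_1$-maximizing measure $\nu$, so that $\int\varphi_1\,d\nu=m$, and run the chain of inequalities
$$m=\int\varphi_1\,d\nu\le\int\varphi_2\,d\nu\le\mbox{ergsup}(f,\varphi_2)=m.$$
Equality must then hold throughout; in particular $\int\varphi_2\,d\nu=m=\mbox{ergsup}(f,\varphi_2)$, so $\nu$ is itself maximizing for $\varphi_2$. Since $\mu$ is by hypothesis the \emph{unique} maximizing measure for $\varphi_2$, we conclude $\nu=\mu$, which is exactly the asserted uniqueness for $\varphi_1$.

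There is no serious obstacle here: the entire argument rests on the elementary monotonicity $\int\varphi_1\,d\nu\le\int\varphi_2\,d\nu$ together with the squeeze. The only point that requires a little care is the opening equality $\int\varphi_1\,d\mu=\int\varphi_2\,d\mu$, which exploits that the two potentials need only agree $\mu$-almost everywhere (agreement on $\mbox{supp}(\mu)$ already suffices), rather than everywhere on $X$. I note also that upper semi-continuity is not used in the argument proper—it is invoked only upstream, to guarantee that maximizing measures exist at all—so the statement is really a soft consequence of monotonicity combined with the uniqueness hypothesis for $\varphi_2$.
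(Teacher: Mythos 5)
Your proof is correct and uses essentially the same argument as the paper: the monotonicity $\int\varphi_1\,d\nu\le\int\varphi_2\,d\nu$, the equality $\int\varphi_1\,d\mu=\int\varphi_2\,d\mu$ coming from agreement on $\mbox{supp}(\mu)$, and the uniqueness hypothesis for $\varphi_2$. The paper compresses this into the single chain $\int\varphi_{1}d\nu\leq\int\varphi_{2}d\nu<\int\varphi_{2}d\mu=\int\varphi_{1}d\mu$ for $\nu\neq\mu$, whereas you unwind it into ``maximizing'' plus ``unique'' steps, but the content is identical.
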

\begin{proof}
Under the hypothesis, for every invariant measure $\nu\neq\mu$, we have
$$
\int\varphi_{1}d\nu\leq\int\varphi_{2}d\nu<\int\varphi_{2}d\mu=\int\varphi_{1}d\mu.
$$
Therefore, $\mu$ is the unique maximizing measure for $\varphi_{1}$, as required.
\end{proof}

\bigskip

In the context of our present work, we will particularly concentrate on $X$ being a one dimensional torus $\mathcal{S}$, and $f(x):=2x(\mod 1),$ and $g^{(c)}(x):=\log|\cos\pi(x+c)|,~~~~~~x\in \mathcal{S},~~c\in[0,1)$. Denote by
$\beta^{(c)}:=\mbox{ergsup}(f,g^{(c)})$, and the main result in this section is as follows.
\begin{theorem}\label{theo_gelfondandergodic}
\begin{equation}\label{equ_Gelfondexponentandergodicoptimization}
    \Delta^{(c)}=1+\frac{\beta^{(c)}}{\log 2},~~\forall c\in(0,1).
\end{equation}
\end{theorem}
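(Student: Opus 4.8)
The plan is to establish the two inequalities $\Delta^{(c)}\ge 1+\beta^{(c)}/\log 2$ and $\Delta^{(c)}\le 1+\beta^{(c)}/\log 2$ separately, both resting on an explicit product formula for the exponential sum along powers of two. Writing $S_N^{(c)}(t)=\sum_{n=0}^{N-1}t^{(c)}(n)e^{2\pi int}$ and expanding each $n<2^k$ in binary as $n=\sum_{j}\epsilon_j2^j$, the multiplicativity $t^{(c)}(n)=\prod_j e^{2\pi ic\epsilon_j}$ together with $e^{2\pi int}=\prod_j e^{2\pi i\epsilon_j2^jt}$ factorizes the sum as
$$
S_{2^k}^{(c)}(t)=\prod_{j=0}^{k-1}\left(1+e^{2\pi i(c+2^jt)}\right),
$$
whence, using $|1+e^{i\theta}|=2|\cos(\theta/2)|$,
$$
\bigl|S_{2^k}^{(c)}(t)\bigr|=2^k\prod_{j=0}^{k-1}\bigl|\cos\pi(2^jt+c)\bigr|.
$$
Since $|\cos\pi(2^jt+c)|$ depends only on $2^jt\bmod 1=f^j(t)$, taking logarithms identifies $\log|S_{2^k}^{(c)}(t)|=k\log2+\sum_{j=0}^{k-1}g^{(c)}(f^jt)$, so that the Birkhoff sums of $g^{(c)}$ over the doubling map appear exactly in the magnitude of the sum.

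First I would set $P_k:=\max_t\prod_{j=0}^{k-1}|\cos\pi(2^jt+c)|$, so that $\log P_k=\sup_t\sum_{j=0}^{k-1}g^{(c)}(f^jt)$ (finite and positive, as the max over the compact torus of a nonnegative product is attained away from the zeros of $g^{(c)}$) and $\max_t|S_{2^k}^{(c)}(t)|=2^kP_k$. The sequence $(\log P_k)$ is subadditive: splitting $\prod_{j=0}^{k+l-1}|\cos\pi(2^jt+c)|$ into its first $k$ factors and its last $l$ factors evaluated at $2^kt$, and maximizing each block separately (the map $t\mapsto 2^kt$ being onto the torus), gives $P_{k+l}\le P_kP_l$. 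By Fekete's lemma $\tfrac1k\log P_k$ converges to $\inf_k\tfrac1k\log P_k$, while Proposition \ref{prop_ergequivalence} identifies $\limsup_k\tfrac1k\log P_k$ with $\mbox{ergsup}(f,g^{(c)})=\beta^{(c)}$. Hence $\lim_k\tfrac1k\log P_k=\beta^{(c)}=\inf_k\tfrac1k\log P_k$, yielding the two-sided control $P_k\ge e^{k\beta^{(c)}}$ for every $k$ and $P_k\le e^{k(\beta^{(c)}+\delta)}$ for all large $k$, for any fixed $\delta>0$.

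The lower bound is then immediate: along $N=2^k$ we get $\max_t|S_N^{(c)}(t)|=2^kP_k\ge 2^ke^{k\beta^{(c)}}=N^{1+\beta^{(c)}/\log2}$, so no exponent $\alpha<1+\beta^{(c)}/\log2$ can make \eqref{equ_Gelfond type oscillations} hold, giving $\Delta^{(c)}\ge 1+\beta^{(c)}/\log2$. For the upper bound I would decompose $[0,N)$ according to the binary expansion $N=\sum_{j=1}^r 2^{a_j}$ with $a_1>\cdots>a_r$ and $N_j=\sum_{i\le j}2^{a_i}$: each dyadic block $[N_{j-1},N_j)$ consists of integers $N_{j-1}+m$ with $m<2^{a_j}$ whose high bits do not interact with those of $m$, so that $s(N_{j-1}+m)=s(N_{j-1})+s(m)$ and the block contributes $e^{2\pi ics(N_{j-1})}e^{2\pi iN_{j-1}t}S_{2^{a_j}}^{(c)}(t)$. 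Summing the $r\le\log_2N+1$ blocks and using the power-of-two estimate gives $\max_t|S_N^{(c)}(t)|\le\sum_{j=1}^r 2^{a_j}P_{a_j}$.

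Finally, bounding each term by $2^{a_j}e^{a_j(\beta^{(c)}+\delta)}=2^{a_j(1+(\beta^{(c)}+\delta)/\log2)}\le N^{1+(\beta^{(c)}+\delta)/\log2}$ (using $a_j\le\log_2N$, and $P_{a_j}\le 1$ for the finitely many small $a_j$), the $\le\log_2N+1$ terms contribute only a logarithmic factor, so $\max_t|S_N^{(c)}(t)|=O\bigl(N^{1+(\beta^{(c)}+\delta)/\log2}\log N\bigr)$ for every $\delta>0$. Absorbing the logarithm into an arbitrarily small power and letting $\delta\to0$ shows that every $\alpha>1+\beta^{(c)}/\log2$ is admissible, hence $\Delta^{(c)}\le1+\beta^{(c)}/\log2$, completing the equality. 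I expect the main obstacle to be the upper bound for general non-dyadic $N$: one must verify that the binary block decomposition genuinely produces clean sub-sums of the form $S_{2^{a_j}}^{(c)}$, i.e.\ that no carries contaminate the additivity of the digit sum, and that aggregating $O(\log N)$ blocks, each controlled only asymptotically through the Fekete limit, inflates the exponent by no more than the harmless logarithmic loss.
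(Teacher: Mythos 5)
Your proof is correct in its core and rests on the same two pillars as the paper's: the product formula $|S_{2^k}^{(c)}(t)|=2^k\prod_{j=0}^{k-1}|\cos\pi(2^jt+c)|$ obtained from $2$-multiplicativity, and the identification of $\limsup_k\frac1k\sup_t\sum_{j=0}^{k-1}g^{(c)}(f^jt)$ with $\beta^{(c)}$ via Proposition~\ref{prop_ergequivalence}. Where you genuinely diverge is in the passage from dyadic to general $N$: the paper outsources this step entirely to a cited result (\cite[Theo.\ 5]{Fan17}), which delivers $\max_t|S_n(t)-S_m(t)|\le D(n-m)^{1+\beta^{(c)}/\log 2}$, whereas you give a self-contained binary block decomposition (the digit blocks of $N_{j-1}$ and $m$ are disjoint, so $s(N_{j-1}+m)=s(N_{j-1})+s(m)$ with no carries), which is correct and makes the argument elementary. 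Your use of Fekete's lemma is likewise a genuine improvement in precision: subadditivity of $\log P_k$ upgrades the paper's limsup to a limit equal to $\inf_k\frac1k\log P_k$, giving the unconditional lower bound $P_k\ge e^{k\beta^{(c)}}$, a direction the paper's write-up never isolates. The one place you deliver strictly less is at the critical exponent itself: your upper bound $O\bigl(N^{1+(\beta^{(c)}+\delta)/\log2}\log N\bigr)$ identifies $1+\beta^{(c)}/\log 2$ as the \emph{infimum} of admissible exponents but does not show that \eqref{equ_Gelfond type oscillations} holds with $\alpha$ equal to that infimum. Fekete gives $P_k\ge e^{k\beta^{(c)}}$ but not $P_k\le Ce^{k\beta^{(c)}}$; boundedness of $\sup_t\sum_{j<k}g^{(c)}(f^jt)-k\beta^{(c)}$ is a Ma\~n\'e--Conze--Guivarc'h-type statement that is not automatic for a subadditive sequence and is delicate here because $g^{(c)}$ has logarithmic singularities --- this is precisely the content hidden in the citation the paper relies on. So if $\Delta^{(c)}$ is read as a genuine minimum rather than an infimum, you should either retain that citation for the final step or state explicitly that you are computing the infimum of admissible exponents.
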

The proof essentially follows from Mauduit, Rivat and S\'{a}rk\"{o}zy \cite{MRS16} and Fan \cite{Fan17}. Though any
specialists in Gelfond exponent shouldn't have any difficulty in providing those
details themselves, we decide to write down the details here for the convenience of readers, as we weren't able to find any precise reference, and the proof itself is ingenious.
\begin{proof}
For each $c\in(0,1)$, denote by
\begin{align*}
S_{N}(x)
&=:\sum_{n=0}^{N-1}t^{(c)}(n)e^{2\pi i x}\\
&=\sum_{n=0}^{N-1}e^{2\pi icS(n)+nx}=\sum_{n=0}^{N-1} F_{n}(x),~~~\mbox{where}~~F_{n}(x):=e^{2\pi i(cS(n)+nx)}.
\end{align*}
First, we claim that
\begin{equation}\label{equ_S2m}
    S_{2^{m}}(x)=\prod_{j=0}^{m-1}(F_{0}(x)+F_{2^{j}}(x))~~\forall m\in \N.
\end{equation}
We will proceed the proof of \eqref{equ_S2m} by induction on $m$. By the definition of $s(n)$, it is clear that
$$
F_{i+2^{j}}(x)=F_{i}(x)\cdot F_{2^{j}}(x),~~\forall j\in\N,~~\mbox{and}~~0\leq i\leq 2^{j}-1.
$$
In particular,
$$
F_{0}(x)\equiv1.
$$
Therefore,
\begin{itemize}
  \item when $m=1$, then
   \begin{align*}
S_{2}(x)&=\sum_{n=0}^{1}e^{2\pi i(cs(n)+nx)}=F_{0}(x)+F_{1}(x).
\end{align*}
  \item when $m=2$, then
   \begin{align*}
S_{4}(x)&=\sum_{n=0}^{3}e^{2\pi i(cs(n)+nx)}\\
&=F_{0}(x)+F_{1}(x)+F_{2}(x)+F_{3}(x)\\
&=F^{2}_{0}(x)+F_{0}(x)F_{1}(x)+F_{0}(x)F_{2}(x)+F_{1}(x)F_{2}(x)\\
&=(F_{0}(x)+F_{1}(x))(F_{0}(x)+F_{2}(x)).
\end{align*}
  \item Suppose $S_{2^{m}}(x)=\prod_{j=0}^{m-1}(F_{0}(x)+F_{2^{j}}(x))$,
  then
\begin{align*}
S_{2^{m+1}}(x)&=S_{2^{m}}(x)+\sum_{i=0}^{2^{m}-1}F_{i+2^{m}}(x)\\
&=S_{2^{m}}(x)+F_{2^{m}}(x)\sum_{i=0}^{2^{m}-1}F_{i}(x)~~~~\text{(By 2-multiplicity)}\\
&=S_{2^{m}}(x)\cdot(F_{0}(x)+F_{2^{m}}(x))\\
&=\prod_{j=0}^{m}(F_{0}(x)+F_{2^{j}}(x))~~~\text{(Using induction hypothesis)}.
\end{align*}
\end{itemize}
This completes the proof of \eqref{equ_S2m}.

Therefore,
\begin{align*}
|S_{2^{m}}(x)|&=\prod_{j=0}^{m-1}\left|F_{0}(x)+F_{2^{j}}(x)\right|=\prod_{j=0}^{m-1}\left|1+e^{2\pi i(cs(2^{j})+2^{j}x)}\right|\\
&=\prod_{j=0}^{m-1}\left|1+e^{2\pi i(c+2^{j}x)}\right|=2^{m}\prod_{j=0}^{m-1}\left|\cos\pi(c+2^{j}x)\right|\cdot\left|e^{i\pi(c+2^{j}x)}\right|\\
&=2^{m}\prod_{j=0}^{m-1}\left|\cos\pi(c+2^{j}x)\right|.
\end{align*}
Note that since the function $g^{(c)}(x)=\log|\cos\pi(x+c)|$ is upper semi-continuous, it follows from Proposition \ref{prop_ergequivalence} that
\begin{equation}\label{equ_ergodicequivalence}
\limsup_{m\to\infty}\frac{1}{m}\sup_{x\in[0,1]}\sum_{j=0}^{m-1}\log|\cos\pi(c+2^{j}x)|=\sup_{\mu\in Inv(g)}\int\log|\cos\pi(x+c)|d\mu=\beta^{(c)}.
\end{equation}
Thus
\begin{equation}\label{equ_ergodicsubsequence}
\limsup_{m\to\infty}\sup_{x\in[0,1]}|S_{2^{m}}(x)|=(2^{m})^{1+\frac{\beta^{(c)}}{\log2}},
\end{equation}
Equation \eqref{equ_ergodicsubsequence}, together with \cite[Theo5]{Fan17} yields that there exists a constant $D>0$ such that
$$
\max_{x\in[0,1]}\left|S_{n}(x)-S_{m}(x)\right|\leq D(n-m)^{1+\frac{\beta^{(c)}}{\log2}}.
$$
That is
$$
\Delta^{(c)}=1+\frac{\beta^{(c)}}{\log2},
$$
as was to be proved.
\end{proof}
\begin{remark}
In particular, when $c=1/2$, we will show in Example \ref{example:classicalthuemorse} in Section \ref{sec_output} that the periodic measure $\frac{1}{2}(\delta_{1/3}+\delta_{2/3})$ is the unique $g_{1/2}$-maximizing measure. This implies that
$$
\beta^{(1/2)}=\frac{1}{2}\left(\log|\cos\pi(1/3+1/2)|+\log|\cos\pi(2/3+1/2)|\right)=\log\frac{\sqrt{3}}{2}.
$$
So we reobtain the Gelfond exponent for the classical Thue Morse sequence
$$
\Delta^{(1/2)}=1+\frac{\log\frac{\sqrt{3}}{2}}{\log2}=\frac{\log3}{\log4}.
$$
\end{remark}

\section{Combinatorial optimization truncation}\label{Sec_combinatorial}

This section is aiming to convert the above ergodic optimization problem into a ``limit state'' of a finite dimensional combinatorial optimization problem (i.e., maximum mean cycle problem on a sequence of quotient de Bruijn graphs). Some preliminaries in graph, wavelet and combinatorial optimization theory are provided, and the main theorem (Theorem \ref{theo_gapcondtion}) in this section is given afterwards.

Let $\Omega=\{0,1\}^{\N}$ and $\Omega^*=\bigcup_{i=0}^{\infty}\{0,1\}^{i}$ with convention $\{0,1\}^{0}=\emptyset$. Given a word $\omega\in\Omega^*$, denote by cylinder $[\omega]$, the set of elements of $\Omega$ that have $\omega$ as the initial sub-word.

\subsection{Quotient de Bruijn graphs and periodic measures}\label{sec_debrujingraph}

The concept of \emph{de Bruijn graphs} $\widetilde{G}_{n}=(\widetilde{V}_{n},\widetilde{E}_{n})_{n\geq 1}$ (together with their analogues for larger alphabets) was introduced independently by De Bruijn \cite{de} and Good \cite{Go}, and is defined as follows.
\begin{itemize}
\item every node $v\in V_{n}$ is exactly the words of length $n-1$ in the alphabet $\{0,1\}$;
\item every edge $e\in E_{n}$ is exactly the words of length $n$ in the alphabet $\{0,1\}$;
\item for each word $\omega$ of length $n$, the source node and the target node of the arc $\omega$ are respectively its initial and final subwords of length $n-1$.
\end{itemize}
The first five de Bruijn graphs are pictured in Figure~\ref{f.graphs}.

\begin{figure}[htb]
\begin{tikzpicture}[baseline] 
   \node(emptyword) at (0, 0){$\emptyset$};
   \draw(emptyword)edge[loop above]  node{\small $0$} (emptyword);
   \draw(emptyword)edge[loop below] node{\small $1$} (emptyword);
\end{tikzpicture}
\qquad
\begin{tikzpicture}[baseline] 
   \node(0) at (0,.75){\small $0$};
   \node(1) at (0,-.75){\small $1$};
   \draw(0)edge[loop above] node{\footnotesize $00$} (0);
   \draw(0)edge[->,bend left=15] node[right]{\footnotesize $01$} (1);
   \draw(1)edge[loop below] node{\footnotesize $11$} (1);
   \draw(1)edge[->,bend left=15] node[left]{\footnotesize $10$} (0);
\end{tikzpicture}
\qquad
\begin{tikzpicture}[baseline] 
   \node(00) at (  0, 1.8){\footnotesize $00$};
   \node(01) at ( .8,   0){\footnotesize $01$};
   \node(10) at (-.8,   0){\footnotesize $10$};
   \node(11) at (  0,-1.8){\footnotesize $11$};
   \draw(00)edge[loop above]       node{\scriptsize $000$} (00);
   \draw(00)edge[->]               node[right]{\scriptsize $001$} (01);
   \draw(01)edge[->,bend right=15] node[above]{\scriptsize $010$} (10);
   \draw(01)edge[->]               node[right]{\scriptsize $011$} (11);
   \draw(10)edge[->]               node[left] {\scriptsize $100$} (00);
   \draw(10)edge[->,bend right=15] node[below]{\scriptsize $101$} (01);
   \draw(11)edge[->]               node[left] {\scriptsize $110$} (10);
   \draw(11)edge[loop below]       node{\scriptsize $111$} (11);
\end{tikzpicture}
\qquad
\begin{tikzpicture}[baseline] 
   \node(000) at (  0, 2.2){\scriptsize $000$};
   \node(001) at ( .8, 1.5){\scriptsize $001$};
   \node(100) at (-.8, 1.5){\scriptsize $100$};
   \node(010) at (  0,  .6){\scriptsize $010$};
   \node(101) at (  0, -.6){\scriptsize $101$};
   \node(011) at ( .8,-1.5){\scriptsize $011$};
   \node(110) at (-.8,-1.5){\scriptsize $110$};
   \node(111) at (  0,-2.2){\scriptsize $111$};
   \draw(000)edge[loop above]      (000);
   \draw(000)edge[->]              (001);
   \draw(001)edge[->]              (010);
   \draw(001)edge[->]              (011);
   \draw(010)edge[->]              (100);
   \draw(010)edge[->,bend left=15] (101);
   \draw(011)edge[->]              (110);
   \draw(011)edge[->]              (111);
   \draw(100)edge[->]              (000);
   \draw(100)edge[->]              (001);
   \draw(101)edge[->,bend left=15] (010);
   \draw(101)edge[->]              (011);
   \draw(110)edge[->]              (100);
   \draw(110)edge[->]              (101);
   \draw(111)edge[->]              (110);
   \draw(111)edge[loop below]      (111);
\end{tikzpicture}
 \quad
 \begin{tikzpicture}[baseline,>={Stealth[round]}]
   \node(0000) at (   0, 3.5){\tiny $0000$};
   \node(0001) at ( 1.3, 3  ){\tiny $0001$};
   \node(1000) at (-1.3, 3  ){\tiny $1000$};
   \node(1001) at (   0, 2.5){\tiny $1001$};
   \node(0010) at (   1, 1  ){\tiny $0010$};
   \node(0100) at (  -1, 1  ){\tiny $0100$};
   \node(0011) at ( 1.9, 0  ){\tiny $0011$};
   \node(0101) at ( .85, 0  ){\tiny $0101$};
   \node(1010) at (-.85, 0  ){\tiny $1010$};
   \node(1100) at (-1.9, 0  ){\tiny $1100$};
   \node(1011) at (   1,-1  ){\tiny $1011$};
   \node(1101) at (  -1,-1  ){\tiny $1101$};
   \node(0110) at (  0 ,-2.5){\tiny $0110$};
   \node(0111) at ( 1.3,-3  ){\tiny $0111$};
   \node(1110) at (-1.3,-3  ){\tiny $1110$};
   \node(1111) at ( 0  ,-3.5){\tiny $1111$};

   \draw(0110)edge[->,bend left=17] (1100);
   \draw(1001)edge[->,bend left=17] (0011);
   \draw(1100)edge[->,bend left=17] (1001);
   \draw(0011)edge[->,bend left=17] (0110);

   \draw(0001)edge[line width=6pt,draw=white] (0010);
   \draw(0100)edge[line width=6pt,draw=white] (1000);
   \draw(1011)edge[line width=6pt,draw=white] (0111);
   \draw(1110)edge[line width=6pt,draw=white] (1101);

   \draw(0000)edge[loop above]      (0000);
   \draw(0000)edge[->]              (0001);
   \draw(0001)edge[->]              (0010); 
   \draw(0001)edge[->,bend left=12] (0011);
   \draw(0010)edge[->]              (0100);
   \draw(0010)edge[->]              (0101);
   \draw(0011)edge[->,bend left=12]  (0111);
   \draw(0100)edge[->]              (1000); 
   \draw(0100)edge[->]              (1001);
   \draw(0101)edge[->,bend right=20](1010);
   \draw(0101)edge[->]              (1011);
   \draw(0110)edge[->]              (1101);
   \draw(0111)edge[->]              (1110);
   \draw(0111)edge[->]              (1111);
   \draw(1000)edge[->]              (0000);
   \draw(1000)edge[->]              (0001);
   \draw(1001)edge[->]              (0010);
   \draw(1010)edge[->]              (0100);
   \draw(1010)edge[->,bend right=20](0101);
   \draw(1011)edge[->]              (0110);
   \draw(1011)edge[->]              (0111); 
   \draw(1100)edge[->,bend left=12] (1000);
   \draw(1101)edge[->]              (1010);
   \draw(1101)edge[->]              (1011);
   \draw(1110)edge[->,bend left=12] (1100);
   \draw(1110)edge[->]              (1101); 
   \draw(1111)edge[->]              (1110);
   \draw(1111)edge[loop below]      (1111);
 \end{tikzpicture}
\caption{Small order de Bruijn graphs $\widetilde{G}_n$ for $1 \le n \le 5$.}\label{f.graphs}
\end{figure}
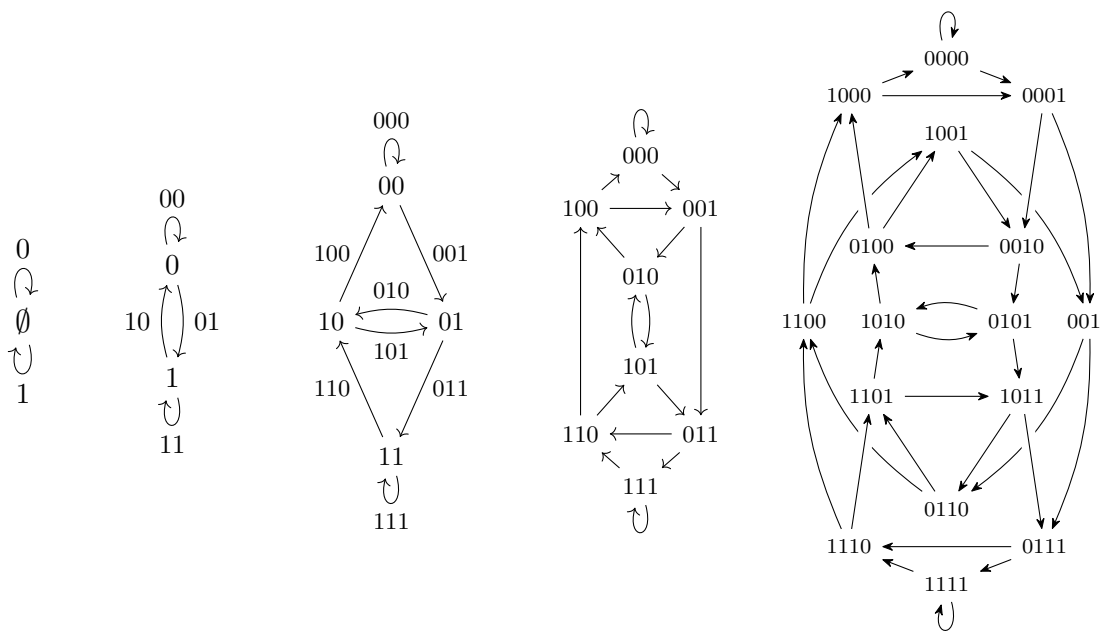

Each simple cycle $\mathcal{C}\subset \widetilde{G}_{n}$ canonically associates a periodic infinite word in the symbolic space $\Omega$ by the concatenation of the words associating from the edges. For convenience, we represent such sequence by the finite repeated block. For example, the symbolic representation $01010101......$ is simply abbreviated as $\overline{01}$. In fact, we will use this simple block to represent the whole period orbit by repeating the binary expansion (of the symbolic representation) to the dyadic fraction of the points and the iterations of the doubling mapping. For example, the cycle represented by $\overline{01}$ gives the orbits
$$
p=2, ~~~~~\frac{1}{3}\rightleftharpoons\frac{2}{3},
$$
and the cycle represented by $\overline{001}$ gives the orbit
$$
p=3, ~~~~~\frac{1}{7}\rightarrow\frac{2}{7}\rightarrow\frac{4}{7}\rightarrow\frac{1}{7}.
$$

With this convention, denote by $G_{n}=(V_{n},E_{n})$ the \emph{quotient de Bruijn graphs} with
\[
V_{n}\ni[v]:=\left\{\begin{array}{ll}
                     v, & \mbox{if}~ v\notin\{\underbrace{0\cdots 0}_{n},\underbrace{1\cdots1}_{n}\} \\
                    \{\underbrace{0\cdots 0}_{n},\underbrace{1\cdots1}_{n}\}, & \mbox{otherwise}.
                   \end{array}\right.
\]
and
$$
E_{n}:=\left\{([u]\rightarrow[v]),~\mbox{if}~(u\rightarrow v)\in \widetilde{E}_{n}\right\}.
$$
In informal terms, the quotient de Bruijn graph $G_{n}$ could be viewed as the de Bruijn graph $\widetilde{G}_{n}$ gluing at two self loops. Analogous to de Bruijn graph, every simple cycle in $G_{n}$ admits a unique periodic orbit, subject to identifying the orbits $\bar{0}$ and $\bar{1}$. Due to this fact, readers might realize later that the quotient de Bruijn graph is a more appropriate tool (e.g., satisfying Theorem \ref{theo_gapcondtion}) than the de Bruijn graph for dealing with the ergodic optimization problems for the potentials on the torus.

\subsection{Association between edge weights on Quotient de Bruijn graphs and Haar function}

{\emph Haar function} is defined by
$$
h_{\omega}:=\frac{1}{2}(\chi_{[\omega0]}-\chi_{[\omega1}]),
$$
where $\chi$ denotes characteristic function, and $\omega0,~\omega1$ means concatenated word.

It is easy to see the set $\{1\}\bigcup\{h_{\omega},\omega\in\Omega^*\}$ forms an orthonormal wavelet basis of $L^{2}(\Omega)$, i.e., the Hilbert space of functions that are square integrable with respect to Lebesgue measure. Thus, every $\varphi\in C^{0}(\Omega)\subseteq L^{2}(\Omega)$ can be uniquely and pointwisely represented as a Haar series:
$$
\varphi=c(\varphi)+\sum_{\omega\in\Omega^*}c_{\omega}(\varphi)h_{\omega},
$$
where the Haar coefficient is
$$
c(\varphi):=\int \varphi dx,~~\mbox{and}~~c_{\omega}(\varphi):=2^{|\omega|+2}\int \varphi h_{\omega}dx=2^{|\omega|+1}(\int_{[\omega0]}\varphi dx-\int_{[\omega1]}\varphi dx).
$$
The n-th approximation of $\varphi$ is given by the sum of the truncated Haar series, namely,
$$
A_{n}(\varphi):=c(\varphi)+\sum_{|\omega|<n}c_{\omega}(\varphi)h_{\omega}.
$$
Equivalently, $A_{n}$ is also the function obtained by averaging $\varphi$ on cylinders of level $n$, i.e.,
\begin{equation}\label{equ_haar trunction}
A_{n}(\varphi)=\sum_{|\omega|=n}\left(2^n\int_{[\omega]}\varphi dx\right)\chi_{[\omega]}.
\end{equation}

For a given $\varphi\in C^{0}(\Omega)$, suppose there is an integer $M$, such that
\begin{equation}\label{equ_bound conditon}
\int_{[\underbrace{0\cdots0}_{n}]}\varphi dx=\int_{[\underbrace{1\cdots1}_{n}]}\varphi dx,~~\forall n\geq M.
\end{equation}
Then $\varphi$ associates {\bf an edge weight} on quotient de Bruijn graphs $\{G_{n}\}_{n\geq M}$. That is, for each edge $e\in E_{n}$ with $e$ associating a word $\omega\in\{0,1\}^{n}$,
\begin{equation}\label{equ_graphweight}
\mbox{wgh}_{n}(e):=A_{n}(\varphi)|_{[\omega]}.
\end{equation}
In fact, Equation \eqref{equ_bound conditon} ensures the edge weight in \eqref{equ_graphweight} is well defined.

\subsection{Maximum mean cycle on quotient de Bruijn graphs and gap criterion}

We now study the maximum mean cycle problem on quotient de Bruijn graph $G$, and this is actually the combinatorial optimization truncation for our original ergodic optimization problem stated in Section \ref{sec_gelfondergodic}. To be more precise, for each cycle $\mathcal{C}\subset G$, define the mean weight $\lambda(\mathcal{C})$ of the cycle as the ratio of the sum of the weights of the cycle and the number of edges in the cycle, namely
$$
\lambda(\cC):=\frac{wgh(\cC)}{|\cC|},
$$
The maximum cycle mean $\lambda_{1}$ of $G$ is defined as
\begin{equation}\label{eq:1st_max}
\lambda_{1}:=\max_{\cC\subset G}\{\lambda(\cC)\}.
\end{equation}
The maximum mean cycle problem considers the estimation of the value of $\lambda_{1}$ and the corresponding cycle $\gamma_1$ with cycle mean $\lambda_{1}$.

Suppose $\gamma_1$ is unique, then we define the second maximum cycle mean
\begin{equation}\label{eq:2nd_max}
\lambda_{2}:=\max_{\cC\subset G,\cC\neq\gamma_1}\{\lambda(\cC)\};
\end{equation}
and let $\gamma_2$ be the corresponding cycle with cycle mean $\lambda_{2}$. The gap
\begin{equation}\label{equ_gap}
gap:=\lambda_{1}-\lambda_{2}>0.
\end{equation}
In \cite{BZ16}, the following gap condition is developed.
\begin{lemma}\cite[Lem4.1]{BZ16}\label{lem_gapcondition}
For each $\varphi\in C^{0}(\Omega)$, suppose the following gap condition is satisfied:
\begin{equation}\label{equ_gapcondition}
    \exists N\in\N,~~~\mbox{s.t.}~~~gap(A_{N}(\varphi))>\sum_{k=N}^{\infty}(k-N+1)\max_{|\omega|=k}|c_{\omega}(\varphi)|.
\end{equation}
Then the maximizing measure for $A_{N}(\varphi)$ is unique and is exactly the periodic measure $\mu$ supported on the periodic orbit associated to the cycle $\gamma_1$ with maximum cycle mean in $G_{N}$. Moreover, $\mu$ is also the unique maximizing measure for $\varphi$ and $A_{n}(\varphi),~~\forall n\geq N$.
\end{lemma}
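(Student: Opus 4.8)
The plan is to read the whole statement through the dictionary between $f$-invariant measures and flows on the quotient de Bruijn graph. Since $A_n(\varphi)$ is constant on level-$n$ cylinders by \eqref{equ_haar trunction}, its Birkhoff sums along an orbit are exactly sums of the edge weights $\mathrm{wgh}_n$ along the corresponding walk in $G_n$; hence by Proposition \ref{prop_ergequivalence} the ergodic supremum $\mathrm{ergsup}(f,A_n(\varphi))$ equals the maximum cycle mean $\lambda_1$ of $G_n$, attained by the periodic measure $\mu$ carried by $\gamma_1$. For the first assertion I would note that every invariant $\nu$ induces an edge-frequency vector on $G_n$ which is a convex combination of normalized simple-cycle flows, so $\int A_n(\varphi)\,d\nu=\sum_j\alpha_j\lambda(\cC_j)\le\lambda_1$ with equality forcing every $\cC_j$ in the decomposition to have mean $\lambda_1$; when $\mathrm{gap}>0$ the only such cycle is $\gamma_1$, whence $\nu=\mu$. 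This already yields uniqueness of the maximizing measure for $A_N(\varphi)$, using nothing beyond $\mathrm{gap}(A_N(\varphi))>0$ (and $N\ge M$, so that the weights \eqref{equ_graphweight} are defined).

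The transfer from $A_N(\varphi)$ to $\varphi$, and to each $A_n(\varphi)$ with $n\ge N$, is where the full strength of \eqref{equ_gapcondition} enters. Writing the Haar telescoping $\varphi=A_N(\varphi)+\sum_{k\ge N}\Delta_k$, with $\Delta_k:=A_{k+1}(\varphi)-A_k(\varphi)=\sum_{|\omega|=k}c_\omega(\varphi)h_\omega$ (convergent pointwise since $\varphi\in C^0(\Omega)$), I would fix an ergodic $\nu\neq\mu$ and split
\[
\int\varphi\,d\mu-\int\varphi\,d\nu=\Big(\lambda_1-\int A_N(\varphi)\,d\nu\Big)+\sum_{k\ge N}\Big(\int\Delta_k\,d\mu-\int\Delta_k\,d\nu\Big).
\]
The first term is the level-$N$ deficit, and the cycle-decomposition bound from the previous paragraph upgrades to the quantitative estimate $\lambda_1-\int A_N(\varphi)\,d\nu\ge \mathrm{gap}(A_N(\varphi))\cdot\sigma$, where $\sigma\in(0,1]$ is the $\nu$-frequency of edges lying off $\gamma_1$ at scale $N$ (positive precisely because $\nu\neq\mu$, by the rigidity just used).

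The crux is to show that the tail $\sum_{k\ge N}\big(\int\Delta_k\,d\mu-\int\Delta_k\,d\nu\big)$ is controlled by the \emph{same} density $\sigma$, now carrying the combinatorial weight $(k-N+1)$. Here the point is that $\Delta_k$ depends only on the length-$(k+1)$ window of a point, and that such a window is off $\gamma_1$ exactly when one of its $k-N+2$ length-$N$ subwindows is off $\gamma_1$; consequently each off-$\gamma_1$ length-$N$ window can corrupt the $\Delta_k$-value at no more than $k-N+1$ positions, each corrupted position changing the integrand by at most $\eps_k:=\max_{|\omega|=k}|c_\omega(\varphi)|$. Coupling the $\nu$-itinerary to $\gamma_1$ by shadowing then gives $\big|\sum_{k\ge N}\int\Delta_k\,d(\mu-\nu)\big|\le\sigma\sum_{k\ge N}(k-N+1)\eps_k$, and combining with the deficit estimate yields
\[
\int\varphi\,d\mu-\int\varphi\,d\nu\ge\sigma\Big(\mathrm{gap}(A_N(\varphi))-\sum_{k\ge N}(k-N+1)\eps_k\Big)>0
\]
by \eqref{equ_gapcondition}. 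Thus $\mu$ is the unique maximizing measure for $\varphi$; replacing the infinite tail by the finite one $\sum_{N\le k<n}$ gives the identical conclusion for each $A_n(\varphi)$, $n\ge N$.

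The main obstacle is precisely this last accounting, not the value of the ergodic supremum. The naive estimate $|\int\Delta_k\,d(\mu-\nu)|\le 2\|\Delta_k\|_\infty\le\eps_k$ ignores that a measure nearly supported on $\gamma_1$—carried by a long cycle that shadows $\gamma_1$ and deviates only briefly—has a correspondingly small level-$N$ deficit, and after dividing by the vanishing $\sigma$ it would only produce the far weaker requirement $\mathrm{gap}>\sum_k\eps_k$. The delicate part is therefore to factor the common $\sigma$ out of both the deficit and the tail and to make the window-overlap count producing the weight $(k-N+1)$ rigorous; I expect this is cleanest when organized inductively over the scales $n=N,N+1,\dots$, where passing from $G_n$ to $G_{n+1}$ costs exactly the tail increment $\sum_{k\ge n}\eps_k$ and the telescoped total is $\sum_{k\ge N}(k-N+1)\eps_k$.
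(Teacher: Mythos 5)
First, a remark on the comparison itself: the paper does not prove this lemma at all --- it is imported verbatim from \cite[Lem.~4.1]{BZ16} --- so there is no in-paper argument to measure yours against; I can only judge the proposal on its own terms. The first half is sound: pushing an invariant measure forward to an edge-frequency circulation on $G_N$, decomposing it into simple-cycle flows, and using $\mathrm{gap}>0$ together with the fact that a simple cycle forces a deterministic itinerary does give uniqueness of the maximizing measure for $A_N(\varphi)$, and your quantitative deficit bound $\lambda_1-\int A_N(\varphi)\,d\nu\ge\mathrm{gap}(A_N(\varphi))\cdot\sigma$ is correct.

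The genuine gap is exactly at the step you yourself flag as the crux: the estimate $\bigl|\sum_{k\ge N}\int\Delta_k\,d(\mu-\nu)\bigr|\le\sigma\sum_{k\ge N}(k-N+1)\eps_k$ is asserted, not proved. Your window-overlap count (a union bound over the $k-N+2$ length-$N$ subwindows of a length-$(k+1)$ window, using invariance of $\nu$) controls only the $\nu$-measure of the set of points whose initial $(k+1)$-window fails to be a window of $\gamma_1$; it says nothing about the discrepancy, on the complementary ``good'' set, between the $\nu$-distribution and the $\mu$-distribution over the finitely many good windows (one per phase of $\gamma_1$), and it is precisely that discrepancy that determines $\int\Delta_k\,d(\mu-\nu)$. ``Coupling the $\nu$-itinerary to $\gamma_1$ by shadowing'' names the missing lemma rather than proving it; making it precise requires, for instance, propagating the invariance relation between the frequencies of good windows at consecutive phases around the cycle, or the scale-by-scale induction you allude to in your last sentence. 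A telling symptom that the union bound is not the right mechanism is that it naturally produces the weight $k-N+2$ rather than the $k-N+1$ of \eqref{equ_gapcondition}, and since the hypothesis is tuned exactly to that constant the discrepancy cannot be absorbed. Without the coupling estimate, all one gets is the trivial bound $|\int\Delta_k\,d(\mu-\nu)|\le\eps_k$, and then, as you correctly observe, the deficit $\mathrm{gap}\cdot\sigma$ cannot dominate the tail for measures with small $\sigma$, so the conclusion for $\varphi$ does not follow.
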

In informal terms, the gap criterion says if the tail of the Haar series is smaller compared to the gap of its initial part, then it does not influence the maximizing measure.

\subsection{Implementation of gap criterion for Gelfond exponent}

We are ready to state our main theorem in this section. Recall that $g^{(c)}(x)=\log|\cos\pi(x+c)|$.
Fix $2\leq d\in\N$, and put
$$
g_{d}^{(c)}(x):=\max\left\{g^{(c)}(x),\log\big|\sin\frac{\pi}{2^{d}}\big|\right\}.
$$
Next, fix $2\leq d'\in\N$, without loss of generality, suppose
$$
\inf g_{d}^{(c)}|_{[0,1/2^{d'})}\geq \sup g_{d}^{(c)}|_{[1-1/2^{d'},1)}=g_{d}^{(c)}(0),
$$
and put
\begin{equation}\label{equ_target function}
g_{d,d'}^{(c)}(x):=\left\{\begin{array}{ll}
                    g_{d}^{(c)}(x), & \mbox{if}~ x\in[0,1-1/2^{d'}) \\
                    g_{d}^{(c)}(1-x), & \mbox{if}~~x\in[1-1/2^{d'},1).
                   \end{array}\right.
\end{equation}
The graph of $g_{d,d'}^{(c)}$ is pictured in Figure \ref{fig:gdd_c}, and according to equality \eqref{equ_haar trunction},\eqref{equ_bound conditon}, we construct the truncation
\begin{equation}
\label{eq:edge_w}
A_{n}(g_{d,d'}^{(c)})=\sum_{|\omega|=n}\left(2^{n}\int_{[\omega]}g_{d,d'}^{(c)}dx\right)\chi_{[\omega]},
\end{equation}
which associates edge weights on quotient de Bruijn graphs $\{G_n\}_{n\geq d'}$.
\begin{figure}
\includegraphics[width=0.6\textwidth]{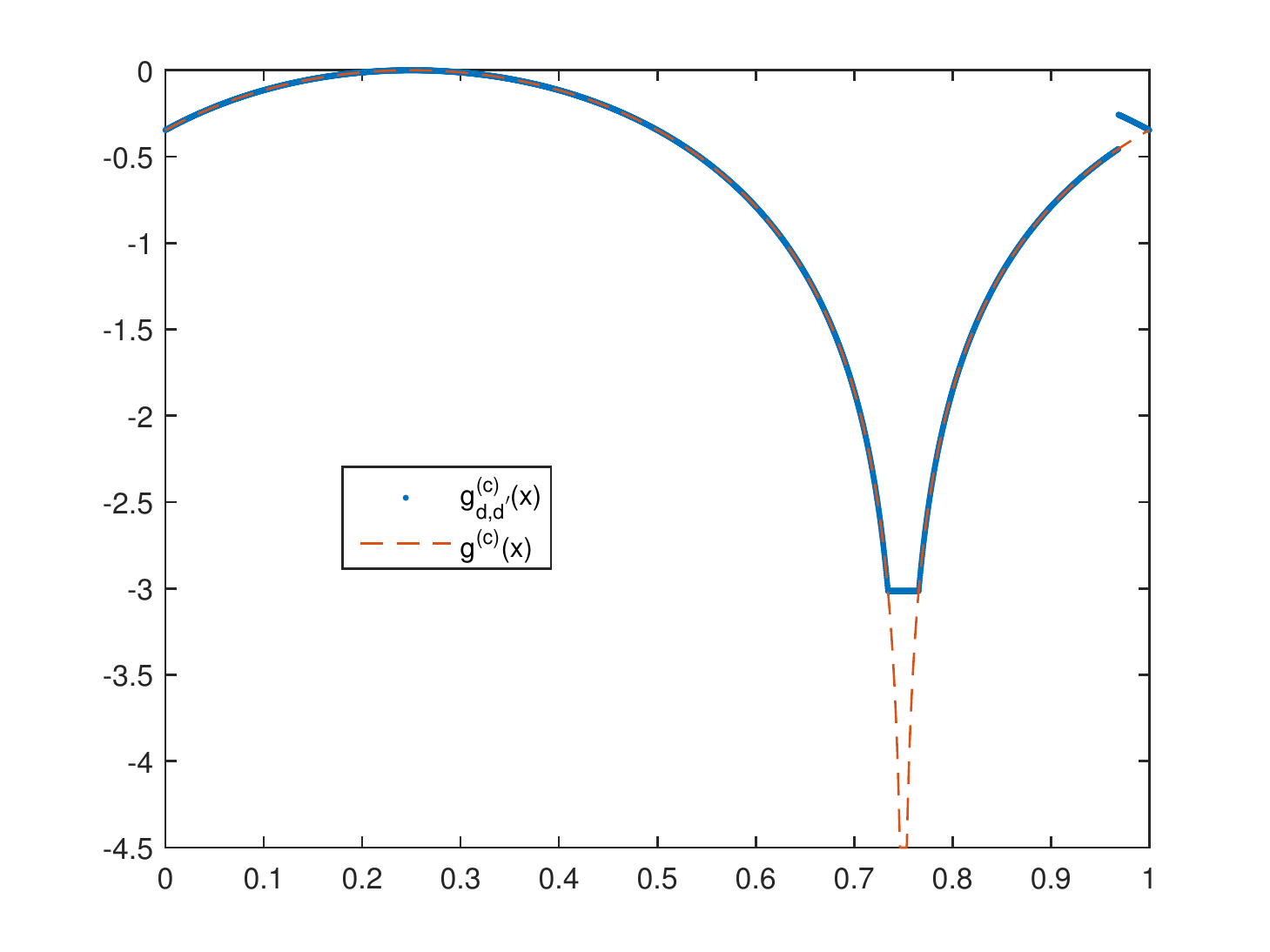}
\caption{The plot of $g_{d}^{(c)}(x) \text{ and } g_{d,d'}^{(c)}(x), \text{ with }d=6,d'=5,c=3/4$}
\label{fig:gdd_c}
\end{figure}
\begin{theorem}\label{theo_gapcondtion}
For every $c\in(0,1)$, suppose there are $d,d',N\in\N$  satisfying
\begin{equation}\label{equ_hypothesis}
\mbox{gap}\left(A_{N}(g_{d,d'}^{(c)})\right)>\frac{5\pi}{2}\cot(\frac{\pi}{2^{d}})2^{-N},
\end{equation}
then the maximizing measure for $g_{d,d'}^{(c)}$ is supported on a periodic orbit(say $\mu$). If further assuming that
\begin{equation}\label{equ_periodiccoincidence}
g_{d,d'}^{(c)}(x)=g^{(c)}(x),~~~\forall x\in \mbox{supp}\mu,
\end{equation}
then $\mu$ is also the unique maximizing measure for $g^{(c)}$.
\end{theorem}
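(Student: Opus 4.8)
The plan is to obtain the two assertions in turn from the apparatus already in place: the first from the gap criterion (Lemma \ref{lem_gapcondition}) applied to $\varphi=g_{d,d'}^{(c)}$, and the second from the monotonicity principle (Proposition \ref{lem_monotonemaximizing}) transferring the conclusion from $g_{d,d'}^{(c)}$ back to $g^{(c)}$. Before Lemma \ref{lem_gapcondition} can be invoked I would first check its two standing requirements for $g_{d,d'}^{(c)}$. Continuity on $\Omega$ is immediate, because $g_d^{(c)}$ is continuous on $[0,1]$ and the surgery \eqref{equ_target function} is carried out exactly across the boundary of the clopen cylinder $[\underbrace{1\cdots1}_{d'}]$, so that no constraint is imposed at the dyadic junction $x=1-2^{-d'}$. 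The boundary condition \eqref{equ_bound conditon} holds with $M=d'$: the reflection in \eqref{equ_target function} forces $g_{d,d'}^{(c)}(1-x)=g_{d,d'}^{(c)}(x)$ for $x\in[0,2^{-d'})$, and the substitution $y=1-x$ then gives $\int_{[\underbrace{0\cdots0}_{n}]}g_{d,d'}^{(c)}\,dx=\int_{[\underbrace{1\cdots1}_{n}]}g_{d,d'}^{(c)}\,dx$ for every $n\ge d'$, which is exactly what legitimizes the edge weights \eqref{eq:edge_w} on the quotient de Bruijn graphs. This identity is precisely what $g^{(c)}$ itself violates for generic $c$, which is the reason the symmetrization is introduced.

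The \emph{heart} of the proof is showing that hypothesis \eqref{equ_hypothesis} implies the gap condition \eqref{equ_gapcondition} for $g_{d,d'}^{(c)}$, and for this I would estimate the Haar tail. I would first establish that $g_{d,d'}^{(c)}$ is Lipschitz with constant $L=\pi\cot(\pi/2^{d})$: wherever $g^{(c)}$ exceeds the floor $\log|\sin(\pi/2^{d})|$ one has $\frac{d}{dx}\log|\cos\pi(x+c)|=-\pi\tan\pi(x+c)$, whose modulus on $\{|\cos\pi(x+c)|\ge\sin(\pi/2^{d})\}$ is maximal exactly at the cap-transition, where it equals $\pi\cot(\pi/2^{d})$; on the capped region the slope vanishes, and the reflection $x\mapsto 1-x$ preserves the constant. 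Writing the Haar coefficient as a difference of half-cylinder averages, $c_\omega(\varphi)=\overline{\varphi}_{[\omega0]}-\overline{\varphi}_{[\omega1]}$, the Lipschitz bound gives $\big|c_\omega(g_{d,d'}^{(c)})\big|\le\tfrac{L}{2}2^{-|\omega|}$ for every $|\omega|\ge d'$ (no level-$k$ cylinder contains the junction in its interior once $k\ge d'$). Since necessarily $N\ge d'$ (the weights \eqref{eq:edge_w} are defined only for $G_n$ with $n\ge d'$), summing the weighted geometric series yields
\begin{equation*}
\sum_{k=N}^{\infty}(k-N+1)\max_{|\omega|=k}\big|c_\omega(g_{d,d'}^{(c)})\big|\le\frac{L}{2}\sum_{k=N}^{\infty}(k-N+1)2^{-k}=2L\,2^{-N}=2\pi\cot(\pi/2^{d})\,2^{-N},
\end{equation*}
which does not exceed the right-hand side of \eqref{equ_hypothesis}; combined with \eqref{equ_hypothesis} this gives the gap condition \eqref{equ_gapcondition}. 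Hence Lemma \ref{lem_gapcondition} furnishes a unique maximizing measure for $g_{d,d'}^{(c)}$, namely the periodic measure $\mu$ carried by the maximum-mean cycle $\gamma_1$ of $G_N$ (read, under the binary-expansion conjugacy between the shift on $\Omega$ and the doubling map, as an $f$-invariant periodic measure on $\mathcal S$). This is the first assertion.

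For the second assertion I would apply Proposition \ref{lem_monotonemaximizing} with $\varphi_1=g^{(c)}$ and $\varphi_2=g_{d,d'}^{(c)}$. The ordering $g^{(c)}\le g_{d,d'}^{(c)}$ follows from the chain $g^{(c)}\le g_d^{(c)}\le g_{d,d'}^{(c)}$, where the first inequality is the definition of the floor and the second is the content of the normalization $\inf g_d^{(c)}|_{[0,2^{-d'})}\ge\sup g_d^{(c)}|_{[1-2^{-d'},1)}$, since on $[1-2^{-d'},1)$ the reflection replaces $g_d^{(c)}(x)$ by the dominating value $g_d^{(c)}(1-x)$. Both potentials are upper semi-continuous ($g^{(c)}$ because $\log|\cos|$ is, and $g_{d,d'}^{(c)}$ because it is continuous), $\mu$ is the unique maximizing measure for $\varphi_2$ by the first part, and the coincidence hypothesis \eqref{equ_periodiccoincidence} says $g^{(c)}=g_{d,d'}^{(c)}$ on $\mbox{supp}\,\mu$; Proposition \ref{lem_monotonemaximizing} then gives that $\mu$ is the unique maximizing measure for $g^{(c)}$, as required.

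The step I expect to be the main obstacle is the quantitative Haar estimate: correctly identifying $\pi\cot(\pi/2^{d})$ as the supremum of $|(g_{d,d'}^{(c)})'|$, attained precisely at the cap-transition, and then controlling the weighted geometric sum so that it stays below the explicit threshold of \eqref{equ_hypothesis}. My rough computation produces the constant $2\pi$, whereas the statement allows $\tfrac{5\pi}{2}$; the extra slack is harmless and would comfortably absorb any finer bookkeeping at the corner points where $g^{(c)}$ meets its floor, or a more careful treatment of $\max_{|\omega|=k}$. A secondary, more routine point is the verification that the reflected potential genuinely lies in $C^{0}(\Omega)$ and obeys \eqref{equ_bound conditon}, since only then does the descent to the quotient de Bruijn graph, and hence the invocation of Lemma \ref{lem_gapcondition}, apply.
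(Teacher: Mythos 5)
Your proof follows the same architecture as the paper's own: bound the Haar coefficients of $g_{d,d'}^{(c)}$ by a Lipschitz estimate with constant $\pi\cot(\pi/2^{d})$, sum the weighted tail to verify the gap condition \eqref{equ_gapcondition}, invoke Lemma \ref{lem_gapcondition} for the first assertion, and transfer the conclusion to $g^{(c)}$ via Proposition \ref{lem_monotonemaximizing} for the second. Your preliminary checks (continuity on $\Omega$ across the dyadic junction, condition \eqref{equ_bound conditon} with $M=d'$, and the chain $g^{(c)}\le g_{d}^{(c)}\le g_{d,d'}^{(c)}$) are correct and are in fact spelled out more carefully than in the paper, which leaves them implicit.

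The one substantive difference is quantitative, and it works in your favour. The paper bounds $|c_{\omega}(g_{d,d'}^{(c)})|\le \pi\cot(\pi/2^{d})\,2^{-|\omega|}$ by comparing a single point of $[\omega0]$ with a single point of $[\omega1]$, and then asserts $\sum_{k\ge N}(k-N+1)2^{-k}=\tfrac52\,2^{-N}$; the correct value of that sum is $4\cdot 2^{-N}$, so the paper's own chain of estimates yields a tail bound of $4\pi\cot(\pi/2^{d})2^{-N}$, which is \emph{not} dominated by the hypothesis threshold $\tfrac{5\pi}{2}\cot(\pi/2^{d})2^{-N}$. Your sharper coefficient bound $\tfrac{\pi}{2}\cot(\pi/2^{d})2^{-|\omega|}$, obtained by exploiting that $[\omega1]$ is the translate of $[\omega0]$ by $2^{-(|\omega|+1)}$, gives the tail $2\pi\cot(\pi/2^{d})2^{-N}<\tfrac{5\pi}{2}\cot(\pi/2^{d})2^{-N}$ and therefore actually justifies the constant $\tfrac{5\pi}{2}$ appearing in \eqref{equ_hypothesis}; in effect your argument repairs an arithmetic slip in the published proof. (Your closing worry about ``finer bookkeeping at the corner points'' is unnecessary --- the one-sided derivative bound $\pi\cot(\pi/2^{d})$ already holds at the cap transition and the reflection point, as you note earlier --- but it is harmless.)
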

\begin{proof}
For each $n$-bit word $\omega\in\{0,1\}^{n}$, we have
\begin{align*}
|c_{\omega}(g_{d,d'}^{(c)})|
&=\left|2^{n+2}\int g_{d,d'}^{(c)}\cdot\frac{\chi_{[\omega0]}-\chi_{[\omega1]}}{2}dx\right|\\
&=2^{n+1}\left|\int_{[\omega0]}g_{d,d'}^{(c)}dx-\int_{[\omega1]}g_{d,d'}^{(c)}dx\right|\\
&=|g_{d,d'}^{(c)}(\xi_{1})-g_{d,d'}^{(c)}(\xi_{2})|,~~~~\text{$\exists \xi_{1}\in[\omega0]$ and $\xi_{2}\in[\omega1]$}\\
&=|(g_{d,d'}^{(c)})'_{+}(\xi_{3})|\cdot|\xi_{1}-\xi_{2}|\leq |(g_{d,d'}^{(c)})'_{+}(\xi_{3})|\cdot 2^{-n},~~\text{$\exists \xi_{3}\in[\xi_{1},\xi_{2}]$}\\
&\leq \pi\cot(\frac{\pi}{2^{d}})\cdot 2^{-n}.
\end{align*}
Therefore,
\begin{align*}
\sum_{k=n}^{\infty}(k-n+1)\max_{|\omega|=k}|c_{\omega}(g_{d,d'}^{(c)})|
&\leq \sum_{k=n}^{\infty}(k-n+1)\pi\cot(\frac{\pi}{2^d})2^{-k}\\
&=\pi\cot(\frac{\pi}{2^d})2^{-n}+\sum_{k=n+1}^{\infty}(k-n+1)\pi\cot(\frac{\pi}{2^{d}})2^{-k}\\
&=\frac{5\pi}{2}\cot(\frac{\pi}{2^d})2^{-n}.
\end{align*}
Together with hypothesis \eqref{equ_hypothesis}, there exists $N\in\N$, such that
$$
\mbox{gap}(A_{N}(g_{d,d'}^{(c)}))>\frac{5\pi}{2}\cot(\frac{\pi}{2^d})\cdot2^{-N}\geq\sum_{k=N}^{\infty}(k-N+1)\max_{|\omega|=k}|c_{\omega}(g_{d,d'}^{(c)})|.
$$
So the gap criterion (Lemma \ref{lem_gapcondition}) directly yields that the maximizing measure for $g_{d,d'}^{(c)}$ is unique and periodic (say $\mu$), and $\mu$ can be determined by the truncation $A_{N}(g_{d,d'}^{(c)})$. This completes the first assertion of the theorem.

Next note that $g_{d,d'}^{(c)}(x)\geq g^{(c)}(x),~~\forall d,d'\in\N~~\mbox{and}~~x\in[0,1)$, therefore, the second assertion of the theorem directly follows from Proposition \ref{lem_monotonemaximizing}, and thus the proof of Theorem \ref{theo_gapcondtion} is complete.
%
\end{proof}

\section{Verification of the assumption in Theorem \ref{theo_gapcondtion}}\label{Sec_verification}

\subsection{The algorithm for verifying the gap criterion}

Needless to say, Theorem 3.2 is meaningless unless the assumption admits a non-empty subset of $c\in (0,1)$ such that there exists some $(d,d',N)\in\mathbb{R}^3$ for which \eqref{equ_hypothesis} and the statements that follow all hold true (hence the gap criterion holds). Since $(0,1)$ is uncountable, we only enumerate over the subset $\Lambda_{L}=\left\{ \frac{i}{2^{L}}:i=0,\ldots,2^{L}-1\right\}$ for a fixed integer $L\in\mathbb{N}$, and check over the finite set of integer triples  $T_{d,d',N}\subset\{(d,d',N)\in\mathbb{R}^3:d\leq d'\leq N\}$
to see if
\begin{equation}\label{eq:gap_inequality}
\mbox{gap}(A_{N}(g_{d,d'}^{(c)}))>2^{-N}\frac{5}{2}\pi\cot\left(\frac{\pi}{2^{d}}\right).
\end{equation}
If so, then the maximizing measure $g^{(c)}$ exists for that $c\in \Lambda_{L} $. The enumeration process is terminated once a triple $(d,d',N)$ is found for a given $c$ such that the assumptions in Theorem \ref{theo_gapcondtion} hold, or $N$ is larger than some upper limit $N_{\max}$. The total number of cases checked for each $c$ is less than $(N_{\max})^3$.

The left-hand side of (\ref{eq:gap_inequality}) represents the gap between the maximum cycle mean ($\mbox{MCM}_1$) and second maximum cycle mean ($\mbox{MCM}_2$) of $N$-th quotient De Bruijn graph ${G}_N=({V}_N,{E}_N)$ associating with edge weights $\mbox{wgh}_N={A}_{N}(g_{d,d'}^{(c)})$, where the formula for $A_{N}(g_{d,d'}^{(c)})$ is given in \eqref{equ_graphweight}. The cycle with maximum mean weight of the given graph is found by a so-called Howard's algorithm as described in \cite{DRG,Howard}, and also formulated in Algorithms in the Appendix \ref{sec_appendix}.

Howard's algorithm gets input $G_N$ and outputs the cycle $\Gamma(A_N(g_{d,d'}^{(c)}))$ with maximum mean weight $\text{MCM}_1(A_N(g_{d,d'}^{(c)}))$. The algorithm for finding the second maximum cycle mean $\text{MCM}_2(A_N(g_{d,d'}^{(c)}))$ is based on that for $\mbox{MCM}_1(A_N(g_{d,d'}^{(c)}))$. More precisely,
\begin{equation}
\mbox{MCM}_2(A_N(g_{d,d'}^{(c)})) = \underset{e\in \Gamma(A_N(g_{d,d'}^{(c)}))}{\max} \mbox{MCM}_1(A_{N,\backslash e}(g_{d,d'}^{(c)}))
\end{equation}
where $A_{N,\backslash e}(g_{d,d'}^{(c)})$ is the weight of the graph obtained by removing the edge $e$ (along with its weight) from the graph $G_N$ while keeping all vertices. By definition,
\begin{equation}
\text{gap}(A_{N}(g_{d,d'}^{(c)})) = \text{MCM}_1(A_N(g_{d,d'}^{(c)})) - \text{MCM}_2(A_N(g_{d,d'}^{(c)})).
\end{equation}
We introduce function $\psi:\Lambda_{L}\rightarrow \mathbb{N}$ such that
\begin{equation}
\psi(c)=\left\{\begin{array}{ll}
|\Gamma(A_N(g_{d,d'}^{(c)}))| &  \exists (d,d',N)\in T_{d,d',N}  \text{ s.t. }\eqref{eq:gap_inequality}, \\
0 & \text{ otherwise.}
\end{array}\right.
\end{equation}
The complete algorithm is outlined in Algorithm \ref{alg:gap_criterion}.
\begin{algorithm}[h]
\caption{Verification of the gap criterion}
\label{alg:gap_criterion}
\begin{algorithmic}[1]	
\Require $\Lambda_{L}, T_{d,d',N}$
\Ensure $\psi(c)$ for $c\in \Lambda_{L}$
\For {every $c\in \Lambda_{L}$}
\State Set $\psi(c)=0$
\For {$d=d_{\min}$ to $d_{\max}$}
\For {$d'=d$ to $d'_{\max}$}
\For {$N=d'$ to $N_{\max}$}
\State Compute edge weights $A_{N}(g_{d,d'}^{(c)})$ for $G_N$.
\State Set $\text{gap}(A_{N}(g_{d,d'}^{(c)})) =0$
\State Calculate $\text{MCM}_1(A_N(g_{d,d'}^{(c)}))$ and $\Gamma(A_N(g_{d,d'}^{(c)}))$ by Algorithm \ref{alg:policy_iteration}.
\For {every edge $e$ in $\Gamma(A_N(g_{d,d'}^{(c)}))$}
\State Calculate $\mbox{MCM}_1(A_{N,\backslash e}(g_{d,d'}^{(c)}))$ by Algorithm \ref{alg:policy_iteration}.
\State Set $\text{gap}(A_{N}(g_{d,d'}^{(c)})) = \text{MCM}_1(A_N(g_{d,d'}^{(c)})) -\mbox{MCM}_1(A_{N,\backslash e}(g_{d,d'}^{(c)}))$
\If {$ \text{gap}(A_{N}(g_{d,d'}^{(c)}))> 2^{-N}\frac{5}{2}\pi\cot\left(\frac{\pi}{2^{d}}\right) $}
\State Set $\psi(c)=|\Gamma(A_N(g_{d,d'}^{(c)}))|$.
\State Terminate the for-loops of $d,d',N$.
\EndIf
\EndFor
\EndFor
\EndFor
\EndFor
\EndFor
\State \Return $\psi(c)$.
\end{algorithmic}
\end{algorithm}

\bigskip

\subsection{Numerical implementation and error estimate}

Numerical errors arise in the execution of Algorithm \ref{alg:gap_criterion}. Enumeration and counting of integers are generally considered error-free. The numerical errors come from machine error, and the numerical approximation error in particular in numerical integrations.
The right-hand side of \eqref{eq:gap_inequality} can be evaluated within
relative error $\epsilon_m$,  where $\epsilon_m$ is machine epsilon. More exactly, the numerical error of evaluating the right-hand side of (\ref{eq:gap_inequality}) is bounded by
\begin{equation}
2^{-N}\frac{5}{2}\pi|\cot(\frac{\pi}{2^{d}})|\cdot \epsilon_m.
\end{equation} In the following, we discuss the numerical implementation and error estimate
for the left-hand side and analyse both the forward and backward numerical
errors.

First, we compute $A_{N}(g_{d,d'}^{(c)})$ by numerical integration
of $\int_{[\omega]}g_{d,d'}^{(c)}(x)\,\mathrm{d}x$. Here we use the rectangle method, where the interval $[\omega]=[x_0,x_{2^{26-N}}]$ is divided into subintervals of length $\Delta x=2^{-26}$, and are represented in turn as
 $[x_0,x_1], \ldots, [x_{2^{26-N}-1},x_{2^{26-N}}]$. By symmetry of  $g_{d,d'}^{(c)}(x)$ on $[\omega]$, we only need to calculate the integral on  $[x_0, x_{2^{25-N}}]$. The numerical integration of $\int_{[x_0, x_{2^{25-N}}]}g_{d,d'}^{(c)}(x)\,\mathrm{d}x$  by rectangle method has two forms $I_L, I_R$ which are written as
\begin{equation}
I_L=\sum_{j=0}^{x_{2^{25-N}-1}}g_{d,d'}^{(c)}(x_j)\Delta x.
\end{equation} and
\begin{equation}
I_R=\sum_{j=1}^{x_{2^{25-N}}}g_{d,d'}^{(c)}(x_j)\Delta x.
\end{equation}
Since $g_{d,d'}^{(c)}(x)$ is  monotone on $[\omega]$, we have the error estimate
\begin{equation}
\max\left\{\left|I_R- \int_{[\omega]}g_{d,d'}^{(c)}(x) \right|, \left|I_L- \int_{[\omega]}g_{d,d'}^{(c)}(x) \right|\right\}\leq |I_L-I_R|.
\end{equation}where
\begin{equation}
|I_L-I_R| =\left|g_{d,d'}^{(c)}(x_0)-g_{d,d'}^{(c)}(x_{2^{25-N}})\right|\Delta x.
\end{equation}
Taking into account the machine error,  the numerical error for $\int_{[\omega]}g_{d,d'}^{(c)}(x)\,\mathrm{d}x$ is bounded by
\begin{equation} \label{eq:error_epsilon}
\epsilon = \left|g_{d,d'}^{(c)}(x_0)-g_{d,d'}^{(c)}(x_{2^{25-N}})\right|\Delta x(1+\epsilon_m),
\end{equation}
which is uniform for all $[\omega]$. Therefore,
\begin{equation}
\|\tilde{A}_{N}(g_{d,d'}^{(c)}) - A_{N}(g_{d,d'}^{(c)}))\|_{\infty}=\epsilon.
\end{equation}

The left-hand side of (\ref{eq:gap_inequality}) represents the gap between the maximum cycle mean ($\mbox{MCM}_1$) and second maximum cycle mean ($\mbox{MCM}_2$) of $N$-th De Bruijn graph with weights $\tilde{A}_{N}(g_{d,d'}^{(c)})$. Here $\tilde{A}_{N}(g_{d,d'}^{(c)})$ is the numerical value of ${A}_{N}(g_{d,d'}^{(c)})$, which has the error bounded by $\epsilon$ as defined in Eq.(\ref{eq:error_epsilon}). By definition,
\begin{equation}
\text{gap}(A_{N}(g_{d,d'}^{(c)})) = \text{MCM}_1({A}_{N}(g_{d,d'}^{(c)})) - \text{MCM}_2({A}_{N}(g_{d,d'}^{(c)})),
\end{equation}
and its numerical value is given by
\begin{equation}
\text{gap}(\tilde{A}_{N}(g_{d,d'}^{(c)})) = \text{MCM}_1(\tilde{A}_{N}(g_{d,d'}^{(c)}))- \text{MCM}_2(\tilde{A}_{N}(g_{d,d'}^{(c)})).
\end{equation}
The forward error of the numerical algorithm for the function $ \text{MCM}_1:\mathbb{R}_+^m \rightarrow \mathbb{R}_+$ comes from evaluation of the cycle mean, which is accurate up to relative error bounded by machine epsilon $\epsilon_m$.  Also we show that the algorithm is  backward stable.
It is obvious that if the weight of each edge of a graph is increased by $\epsilon$, the cycle with maximum mean is unchanged. Therefore,
\begin{equation}
\text{MCM}_1({A}_{N}(g_{d,d'}^{(c)})) + \epsilon = \text{MCM}_1({A}_{N}(g_{d,d'}^{(c)})+\epsilon).
\end{equation}
For $G_N$ with edge weight $\tilde{A}_{N}(g_{d,d'}^{(c)})$, we denote the cycle with maximum mean weight by $\tilde\cC$. For $G_N$ with edge weight ${A}_{N}(g_{d,d'}^{(c)})+\epsilon$, the mean weight of $\tilde\cC$ is less than or equal to $\text{MCM}_1({A}_{N}(g_{d,d'}^{(c)})+\epsilon)$, but greater than or equal to $\text{MCM}_1(\tilde{A}_{N}(g_{d,d'}^{(c)}))$. Therefore, we have
\begin{equation}
\text{MCM}_1(\tilde{A}_{N}(g_{d,d'}^{(c)})) \leq \text{MCM}_1({A}_{N}(g_{d,d'}^{(c)})) + \epsilon.
\end{equation}
By similar arguments, we can show that
\begin{equation}
\text{MCM}_1(\tilde{A}_{N}(g_{d,d'}^{(c)})) \geq \text{MCM}_1({A}_{N}(g_{d,d'}^{(c)})) - \epsilon.
\end{equation}
 It is readily checked that
\begin{equation}\label{eq:error_bound1}
|\text{MCM}_1(\tilde{A}_{N}(g_{d,d'}^{(c)})) - \text{MCM}_1({A}_{N}(g_{d,d'}^{(c)}))| \leq \|\tilde{A}_{N}(g_{d,d'}^{(c)}) - A_{N}(g_{d,d'}^{(c)}))\|_{\infty}=\epsilon,
\end{equation}
where $\epsilon$ is computed in \eqref{eq:error_epsilon}.
Similarly we have
\begin{equation}
|\text{MCM}_2(\tilde{A}_{N}(g_{d,d'}^{(c)})) - \text{MCM}_2({A}_{N}(g_{d,d'}^{(c)}))| \leq \epsilon.
\end{equation}

Based on the above backward error analysis, a sufficient condition for inequality (\ref{eq:gap_inequality}) is
\begin{equation}
\text{gap}(\tilde{A}_{N}(g_{d,d'}^{(c)})) \geq 2^{-N}\frac{5}{2}\pi\cot(\frac{\pi}{2^{d}}) + 2\epsilon
\end{equation}
for $\epsilon$ defined in Eq.(\ref{eq:error_epsilon}). If the forward error is taken into account, then the sufficient condition becomes
\begin{equation}\label{eq:_numer_gap_inequality}
\text{gap}(\tilde{A}_{N}(g_{d,d'}^{(c)}))> 2^{-N}\frac{5}{2}\pi\cot\left(\frac{\pi}{2^{d}}\right) + 2\epsilon + \left(2^{-N}\frac{5}{2}\pi\cot\left(\frac{\pi}{2^{d}}\right) + \text{gap}(\tilde{A}_{N}(g_{d,d'}^{(c)}))\right)\cdot\epsilon_m,
\end{equation}
for the machine epsilon $\epsilon_m$. The last term represents the forward error for numerical computation, which is negligible in this problem.


\section{Outputs}\label{sec_output}
In this section, we provide a detailed explanation of the mechanism for drawing Figure \ref{fig:butterfly}, as well as the robustness results stated in Section \ref{sec_introduction}.
\subsection{Drawing Figure \ref{fig:butterfly}}
We choose
$$
\Lambda_{10}=\left\{ \frac{i}{2^{10}}:i=0,\ldots,2^{10}-1\right\},
$$
and
$$
T_{d,d',N}=\left\{(d,d',N):d=3,\ldots,15;d'=3,\ldots,22,N=3,\ldots,22\right\}.
$$
For each $c\in \Lambda_{10}$, we enumerate the set of triples for the parameters $T_{d,d',N}$
and verify Inequality (\ref{eq:gap_inequality}), by checking the Inequality (\ref{eq:_numer_gap_inequality}). In particular, we have
\begin{example}\label{example:classicalthuemorse}
  When specializing $c=1/2$, it follows from our Algorithm \ref{alg:gap_criterion}, that inequality \eqref{eq:_numer_gap_inequality} holds at $d=3,d'=3,N=10$. Moreover, Algorithm \ref{alg:gap_criterion} also finds the simple cycle coded with $\overline{01}$ in $10$-th quotient de Bruijn graph $G_{10}$ is the unique cycle with maximizing mean weight. Therefore, following from Theorem \ref{theo_gapcondtion}, we have the invariant periodic measure $\frac{1}{2}(\delta_{1/3}+\delta_{2/3})$ as the unique maximizing measure for $g_{1/2}$, and thus
  $$
  \Delta^{(1/2)}=1+\left(\int g^{(1/2)}d\frac{1}{2}(\delta_{1/3}+\delta_{2/3})\right)/\log2=\frac{\log3}{\log4}\approx0.79248125.
  $$
\end{example}

We show that there are some other examples analogous to Example \ref{example:classicalthuemorse}.
\begin{example}\label{example:positivecases}
  When specializing $c=1/4$ or $3/8$, it follows from our Algorithm \ref{alg:gap_criterion}, that inequality \eqref{eq:_numer_gap_inequality} holds at $d=3,d'=3,N=15$. Moreover, Algorithm \ref{alg:gap_criterion} also finds that the cycle coded with $\overline{0111}$ and $\overline{011}$ in the $15$-th quotient de Bruijn graph $G_{15}$ are the unique cycle with maximizing mean weight respectively. Therefore, the periodic invariant measures $\frac{1}{4}(\delta_{7/15}+\delta_{14/15}+\delta_{13/15}+\delta_{11/15})$ and $\frac{1}{3}(\delta_{3/7}+\delta_{6/7}+\delta_{5/7})$ are the unique maximizing measure for $g^{(1/4)}$ and $g^{(3/8)}$ respectively. Thus
  $$
  \Delta^{(1/4)}=1+\left(\int g^{(1/4)}d\frac{1}{4}(\delta_{7/15}+\delta_{14/15}+\delta_{13/15}+\delta_{11/15})\right)/\log2\approx0.7442276;
  $$
  and
  $$
  \Delta^{(3/8)}=1+\left(\int g^{(3/8)}d\frac{1}{3}(\delta_{3/7}+\delta_{6/7}+\delta_{5/7})\right)/\log2\approx 0.7416347.
  $$
\end{example}

However, there are examples in contrast to Example \ref{example:classicalthuemorse} and \ref{example:positivecases}.
\begin{example}\label{example:negativecases}
When specializing $c=\frac{192}{1024}$, for all the possible choices in $T_{d,d',N}$, inequality \eqref{eq:_numer_gap_inequality} never holds. Therefore, we can't deduce from our Algorithm \ref{alg:gap_criterion} up to $T_{d,d',N}$ level, about the maximizing measure for $g^{(\frac{192}{1024})}$. Accordingly, $\Delta^{(\frac{192}{1024})}$ is untestable from our method, up to $T_{d,d',N}$ level.
\end{example}

The comprehensive enumeration data for every $c\in\Lambda_{10}$ are available at the site:
\url{https://www.dropbox.com/s/y5j0ez3v96ld7a3/formalized_result521%281%29.xlsx?dl=0}, and these data form the Gelfond exponent function (i.e., the blue dot curve) in Figure \ref{fig:butterfly}.

From the enumeration, for each $c\in\Lambda_{10}\backslash\Lambda_{10}^{U}$, there is a triple $(d,d',N)\in T_{d,d',N}$, such that inequality \eqref{eq:_numer_gap_inequality} holds in Algorithm \ref{alg:gap_criterion}. Thus, from Theorem \ref{theo_gapcondtion}, there is always a unique periodic invariant measure $\mu_{c}$, which is the maximizing measure for $g^{(c)}$, and accordingly,
\begin{equation}\label{equ_Gelfondexponentgood}
 \Delta^{(c)}=1+\frac{\int g^{(c)}d\mu_{c}}{\log2}=1+\frac{\sum_{p_{i}\in supp(\mu_{c}),i=1,\cdots,per(\mu_{c})}g^{(c)}(p_{i})}{\log2}.
\end{equation}
One the other side, for each $c\in\Lambda_{10}^{U}$, our Algorithm \ref{alg:gap_criterion} is untestable up to $T_{d,d',N}$ level.

\medskip

Next, we prove the real analyticity of the Gelfond exponent function $c\to\Delta^{(c)}$ on every $c\in\Lambda_{10}\backslash\Lambda_{10}^{U}$. This can be deduced as follows. First, fix any $c\in\Lambda_{10}\backslash\Lambda^{U}_{10}$, we know there is a triple $(d,d',N)\in T_{d,d',N}$ such that Inequality \ref{eq:_numer_gap_inequality} holds.
By the continuity of $\mbox{gap}$ function, it is clear that there exists an open neighborhood $U$ centered at $c$, such that for every $c'\in U$, we still have
$$
\text{gap}(\tilde{A}_{N}(g_{d,d'}^{(c')})) \geq 2^{-N}\frac{5}{2}\pi\cot\left(\frac{\pi}{2^{d}}\right) + 2\epsilon + \left(2^{-N}\frac{5}{2}\pi\cot\left(\frac{\pi}{2^{d}}\right) + \text{gap}(\tilde{A}_{N}(g_{d,d'}^{(c')}))\right)\cdot\epsilon_m,
$$
and $\mu_{c}$ is also the cycle with maximizing mean weight in $G_{N}$. By applying Theorem \ref{theo_gapcondtion} for parameter $c'$, we have $\mu_{c}$ is the universal periodic invariant maximizing measure for $g^{(c')}$. Moreover,
\begin{eqnarray}
  |\Delta^{(c)}-\Delta^{(c')}| &=& \frac{1}{\log2}\sum_{p_{i}\in supp(\mu_{c}),i=1,\cdots,per(\mu_{c})}g^{(c)}(p_{i})|g^{(c)}(p_{i})-g^{(c')}(p_{i})|,
\end{eqnarray}
and every $p_{i}$ is away from the logarithm pole of $g^{(c)}$ and $g^{(c')}$. At each $p_{i}$, $g^{(c)}(p_{i})$ is real analytic with respect to parameter $c$, thus, we have $\Delta^{(c)}$ is real analytic at every $c\in\Lambda_{10}\backslash\Lambda^{U}_{10}$.

\medskip

\subsection{Hunt-Ott type's results}

Based on our enumeration data, we are able to do some Hunt-Ott type's results (as what they have done for the trigonometric potential functions in \cite{HO96,HO962}), other than Figure \ref{fig:butterfly}. We believe these results will have their independent interest.

Let $h: \Lambda_{10}\rightarrow \mathbb{N}$, defined by
\begin{equation}\label{equ_lengthfunction}
h(c)=\left\{\begin{array}{ll}
\mbox{Per}(\mu_{c}) &  c\in \Lambda_{10}\backslash\Lambda_{10}^{U}, \\
0 & \text{ otherwise.}
\end{array}\right.
\end{equation}
We plot the graph of $h$ at Figure \ref{fig:cat}.

\begin{figure}
\includegraphics[width=0.8\textwidth]{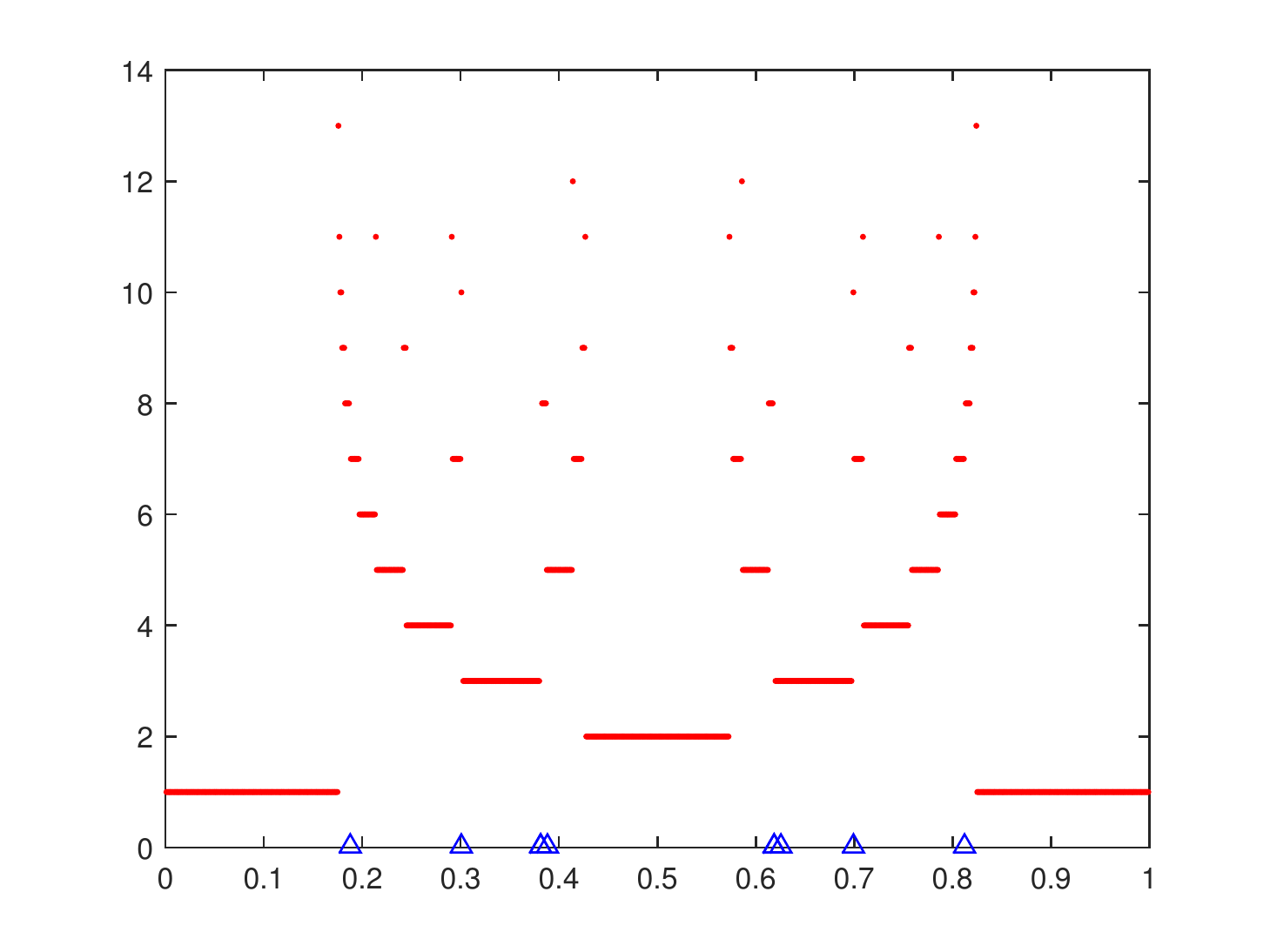}
\caption{Graph of function $h$ in Equation \eqref{equ_lengthfunction}. The 8 triangles are the values $c$ where our Algorithm \ref{alg:gap_criterion} untestable, and these 8 points seem like the ``tipping points'' for sudden changes of the period of the maximizing measures.}
\label{fig:cat}
\end{figure}

Next, denote by the $\Gamma$-interval $[a,b]$, the largest connected interval (which allows $a=b$, such that the interval shrinks to be a singleton) for $a,b\in \Lambda_{10}$ with $\mbox{Per}(\mu_{c})\equiv p_{\Gamma}~~(\mbox{a constant}),~~\forall a\leq c\leq b$. Denote also the $\Gamma$-interval for each $c\in\Lambda_{10}\backslash\Lambda_{10}^{U}$ by a singleton at $c$. It is observed a Farey tree type-structure appearing in Figure \ref{fig:cat}. That is, between any two $\Gamma$-intervals with optimal periodic measure of period $p_{1}$ and $p_{2}$,  there always exists a smaller $\Gamma$-interval with optimal periodic measure of period $p_{1}+p_{2}$, and all the other $\Gamma$-intervals in between have period greater than $p_{1}+p_{2}$. For example, consider the subinterval $[\frac{155}{512},\frac{293}{512}]$. Between the period 2-interval $[\frac{219}{512},\frac{293}{512}]$ and 3-interval $[\frac{155}{512},\frac{389}{1024}]$, there is a 5-interval $[\frac{397}{1024},\frac{423}{1024}]$, while between the formal 3-interval and 5-interval, there is a 8-interval $[\frac{49}{128},\frac{99}{256}]$.

On the other hand, let
$$
\rho(p):=\frac{1}{1024}\sharp\{c\in\Lambda_{10},~~\mbox{and}~~Per(\mu_{c})=p\}.
$$
Figure \ref{fig:cycle_dist} illustrates the distribution of $\rho(p)$, and it is clearly observed that the $\rho(p)$ asymptotically decreases, as $p$ increases. Moreover, Figure \ref{fig:linear_fit} fits
\begin{equation}\label{equ_asymptoticbehavior}
    \rho(p)\backsim p\cdot(\exp(-0.78))^{p}\Phi(p),
\end{equation}
where $\Phi(p)$ is the Euler function.
\begin{figure}
\includegraphics[width=0.6\textwidth]{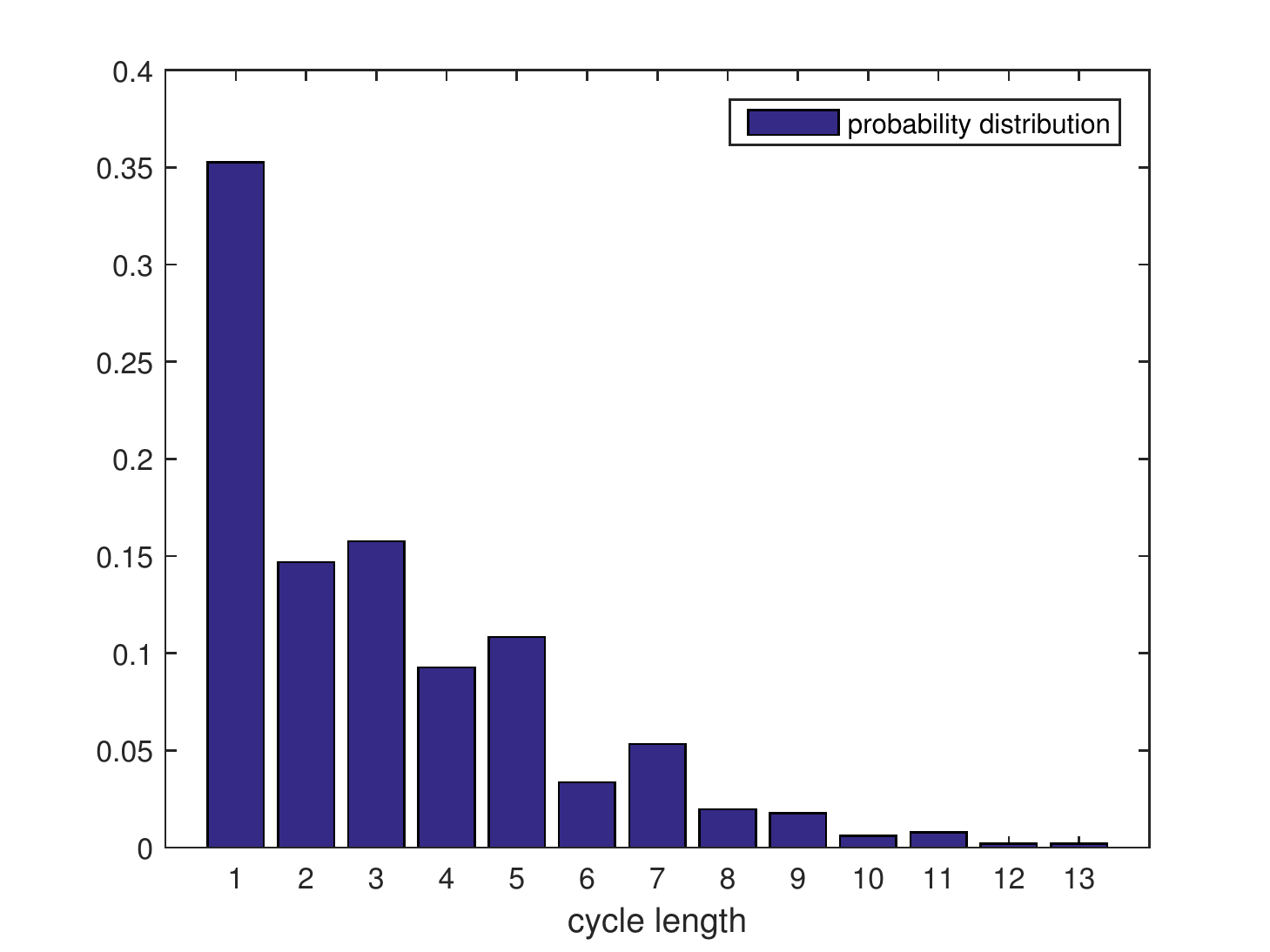}
\caption{$\rho(p)$ vs. $p$.}
\label{fig:cycle_dist}
\end{figure}

\begin{figure}
\includegraphics[width=0.6\textwidth]{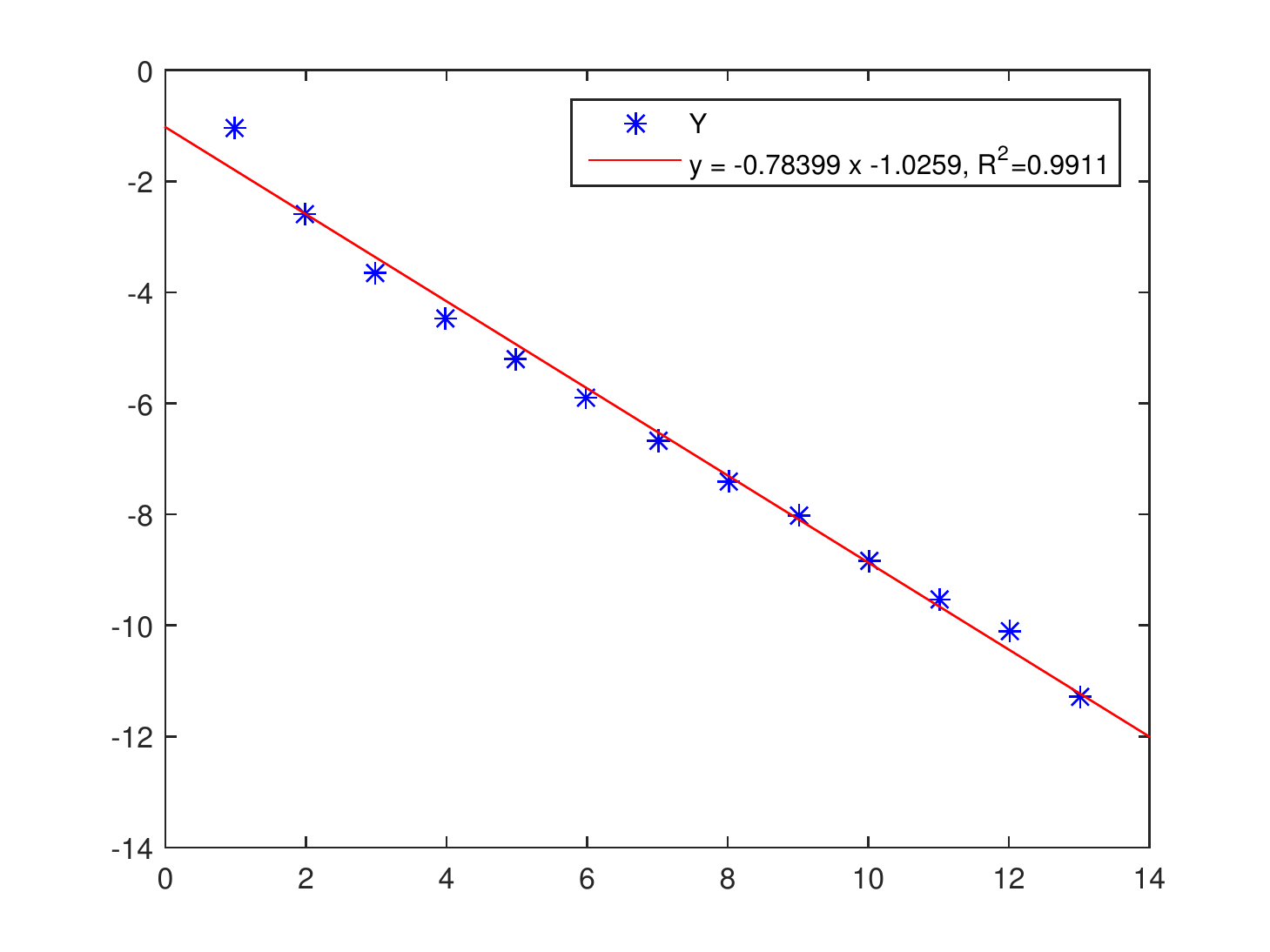}
\caption{Fitting of the formula for distribution as $\log(\rho(p)/p\Phi(p))$ vs $p$. The R-square is 0.9911, indicating a strong goodness-of-fit.}
\label{fig:linear_fit}
\end{figure}

Finally, we zoom in to the neighborhood of $c=390/1024,391/1024$, where our Algorithm \ref{alg:gap_criterion} is untestable. As shown in Figure \ref{fig:zoomin}, those untestable values seem to be isolated (in a refined scaling).

\begin{figure}
\includegraphics[width=0.6\textwidth]{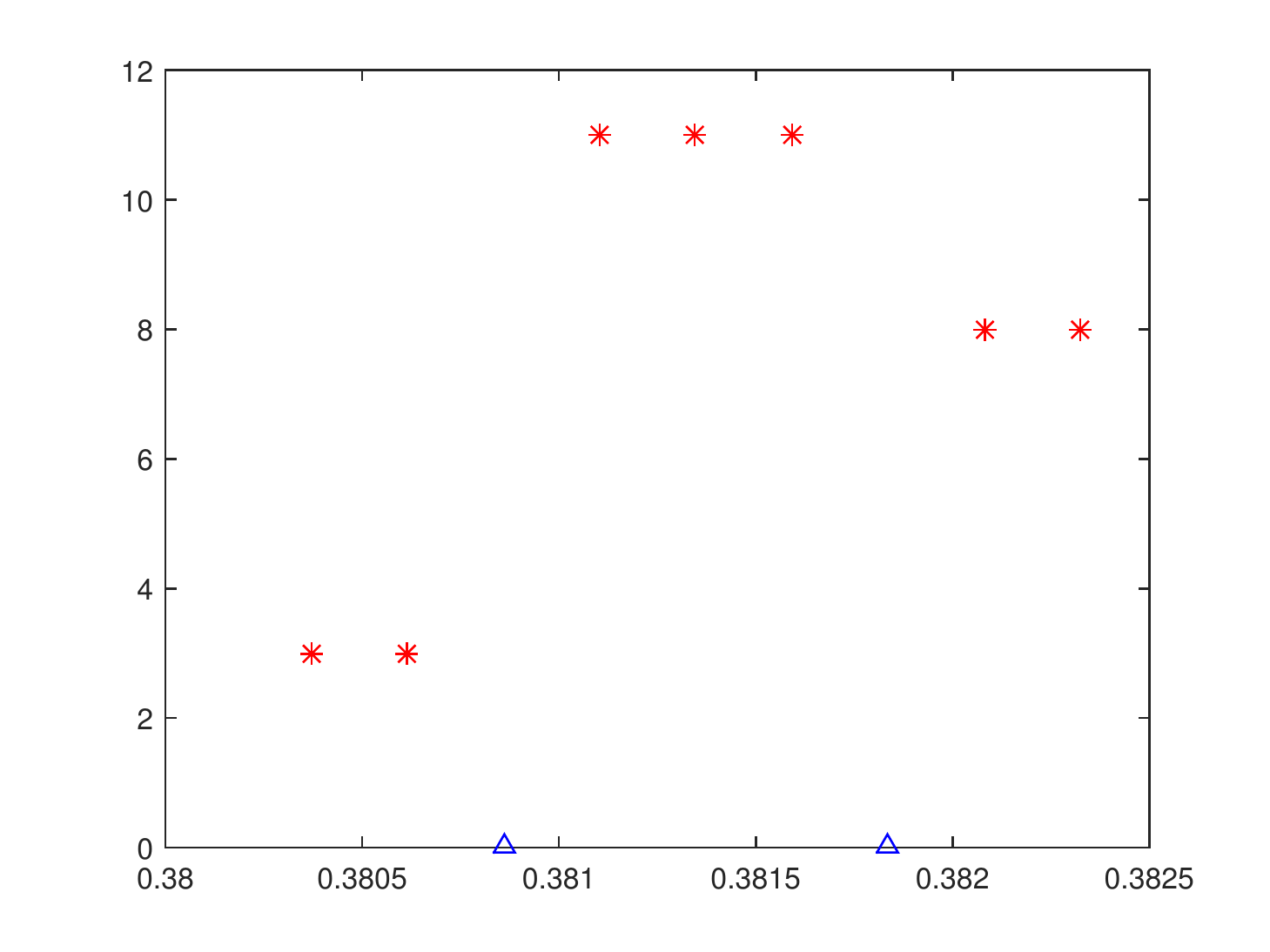}
\caption{We zoom in Figure \ref{fig:cat} at the subinterval $[0.38,0.3815]$, by computing points in $\Lambda_{12}$ (shown in star dots). Every point is testable by the Algorithm \ref{alg:gap_criterion}. Thus the untestable points $\frac{390}{1024},\frac{391}{1024}$ are isolated among $\Lambda_{12}$.}
\label{fig:zoomin}
\end{figure}


\section{Appendix: Howard's algorithm for finding the maximum cycle mean}\label{sec_appendix}

In this Appendix, we provide some detailed descriptions of Howard's algorithm.

\subsection{Max-plus algebra and the spectral problem}

The max-plus algebra is defined on the max-plus semiring $\mathbb{R}_{\text{max}}=\mathbb{R}\cup \{-\infty\}$. The addition operation $\oplus$ is defined as the binary max and the multiplication operation $\otimes$ is the usual addition $+$ on $\mathbb{R}$. More exactly, $a\oplus b = \max\{a,b\}$, and $a\otimes b= a+b$ for all $a,b\in \mathbb{R}$, with special cases $a\oplus\mathbb{O} = a$, $a\otimes \mathbf{1}=a$ and $a\otimes \mathbb{O}=\mathbb{O}$. The zero element $\mathbb{O}=-\infty$ and the unit element $\mathbf{1}=0$ are defined as such. The operations on $\mathbb{R}_{\max}$ are commutative and associative as those on $\mathbb{R}$.

Given a matrix $A\in (\mathbb{R}_{\max})^{n\times n}$, the spectral problem is written as
\begin{equation}\label{eq:spectral}
Ax=\lambda x,
\end{equation}
where $x\in (\mathbb{R}_{\text{max}})^n\backslash \{\mathbb{O}^n\}$ and $\lambda\in \mathbb{R}_{\text{max}}$.

The distinction about the spectral problem in max-plus algebra is that if the eigenmode exists, the algorithm for finding it can always be terminated in finite iterations and the solution is exact. In contrast, in usual algebra the eigenvalue problem always requires iterative methods that can only approximately calculate the eigenvalue, with the approximation error depending on the condition number of the matrix and the number of iterations taken.

\subsection{Howard's algorithm for finding the maximum cycle mean}

Given a directed graph $G=(V,E)$, equipped with edge weight $w:E\rightarrow \mathbb{R}_+$. The maximum cycle mean for a strongly connected graph $G$ is given by the (unique) eigenvalue of the spectral problem \eqref{eq:spectral} in the max-plus algebra, where $A$ is the adjoint matrix for $G$, i.e., $A_{i,j}=w(i,j)\;{\text{if }}(i,j)\in E$ and $A_{i,j}=\mathbb{O}$ otherwise. The matrix $A$ is called irreducible if $G$ is strongly connected. The uniqueness of the eigenvalue for an irreducible square matrix in the max-plus algebra is shown in \cite{Howard}. Moreover, the eigenvalue can be found exactly in finitely many steps \cite{Howard}.

The well received method to solve \eqref{eq:spectral} is based on Karp's algorithm which has time complexity $O(n^3)$ and space complexity $O(n)$. Cochet-Terrassion et. al. in \cite{Howard} propose another finite-step termination algorithm with almost linear average time complexity. It is based on the specialization of Howard's policy improvement scheme to max-plus algebra. We use this method in our computation. In the following, we briefly describe Howard's policy improvement scheme, with adaptation to solve the spectral problem in max-plus algebra. The description would generally follow from \cite{Howard}.

It is easily checked that \eqref{eq:spectral} is equivalent to
\begin{equation}
\underset{1\leq j \leq n}{\max}(A_{ij}+x_j)=\lambda + x_i,\;\forall i\in\{1,\ldots,n\}.
\end{equation}
For simplicity of arguments, we use $G$ and its corresponding adjoint matrix $A$ interchangeably. In what follows, we assume that $A$ is irreducible (or equivalently $G$ is strongly connected). For each edge $(i,j)\in E$, we denote initial node map $\text{In}(i,j)=i$, and terminal node map $\text{Out}(i,j)=j$. The policy is a map
\begin{equation}
\pi: V\rightarrow E,
\end{equation}
such that $\text{In}(\pi(i))=i,\;\forall i\in E$. The matrix $A^{\pi}$ associated with the policy $\pi$ is defined as
\begin{equation}
A^{\pi}_{ij}=\left\{\begin{array}{ll}
w(\pi(i)) & \text{if }j=\text{Out}(\pi(i))\\
\mathbb{O} &\text{otherwise}
\end{array}
\right.
\end{equation}
The algorithm for finding $\lambda$ in \eqref{eq:spectral} is summarized in Algorithm \ref{alg:policy_iteration}, which requires two subroutines \ref{alg:value_determin} and \ref{alg:policy_improve}.
First, Algorithm \ref{alg:value_determin} finds the eigenvalue-eigenvector pair (also called eigenmode) $(\eta,x)$ of matrix $A^{\pi}$.
\begin{algorithm}[h]
\caption{Value Determination}
\label{alg:value_determin}
\begin{algorithmic}[1]	
\Require $A,\pi$
\Ensure $\eta=(\eta_i,\ldots,\eta_n),\;x=(x_1,\ldots,x_n)$
\State Find a cycle $c$ in $A^{\pi}$ \label{value_begin}
\State Calculate mean cycle weight $\bar{\eta}=\frac{\sum_{e\in c}w(e)}{\sum_{e\in c}1}$.
\State Select an arbitrary node $i\in c$
\State Set $\eta_i=\bar{\eta},\; x_i=0$.
\For {all the nodes $j$ that have access to $i$ in backward topological order}
\State Set $\eta_j=\bar{\eta},\; x_j=w(\pi(j))-\bar{\eta}+x_{\text{Out}(\pi(j))}$
\EndFor \label{value_end}
\While {there exists a nonempty set $C$ not having access to $i$}
\State Repeat steps \ref{value_begin} to \ref{value_end} using the restriction of $A$ and $\pi$ to $C$ in place of $A$ and $\pi$.
\EndWhile
\State \Return
\end{algorithmic}
\end{algorithm}

Second, given a policy $\pi$, together with an eigenmode $(\eta,x)$ of $A^{\pi}$, Algorithm \ref{alg:policy_improve} finds a ``better'' policy $\pi'$. By better we mean that $\chi(A^{\pi'})\geq \chi(A^{\pi})$, where cycle time vector $\chi(A)$ is defined as
\begin{equation}
\chi(A)=\underset{k\rightarrow\infty}{\lim}\frac{1}{k} \times A^{k}x.
\end{equation}
Here we note that $\chi(A)$ exists and is independent of $x\in\mathbb{R}^n$. The proof can be found in \cite{Howard}, but the intuition is comparing it to the power iteration for finding the maximal norm eigenvalue of a symmetric matrix in $\mathbb{R}^{n\times n}$.

\begin{algorithm}[h]
\caption{Policy Improvement}
\label{alg:policy_improve}
\begin{algorithmic}[1]
\Require $A,\pi,(\eta,x)$.
\Ensure $\pi'$
\State Set $K(i)=\underset{(i,j)\in E}{\arg\max}\;\eta_j$ and $L(i)=\underset{(i,j)\in K(i)}{\arg\max}\;(w(i,j)-\eta_j+x_j)$ for $i=1,\ldots,n$, $I=\{i|\underset{(i,j)\in K(i)}{\max}(w(i,j)-\eta_j+x_j)>x_i\}$, $J=\{i|\underset{(i,j)\in E}{\max}\eta_j > \eta_i\}$.
\If {$I=J=\O$}
\State \Return
\Else
\If {$J\neq \O$}
\State \[\pi'(i)=\left\{\begin{array}
{ll}\text{an arbitrary }e\in K(i) & \text{if }i\in J,\\
\pi(i) &\text{if }i\notin J.
\end{array}\right.\]
\EndIf
\If {$J=\O,I\neq \O$}
\State \[\pi'(i)=\left\{\begin{array}
{ll}\text{an arbitrary }e\in L(i) & \text{if }i\in I,\\
\pi(i) &\text{if }i\notin I.
\end{array}\right.\]
\EndIf
\EndIf
\State \Return
\end{algorithmic}
\end{algorithm}

Last, the Algorithms \ref{alg:value_determin} and \ref{alg:policy_improve} are called to find the eigenmode of $A$. In short, the policy is iteratively improved until situation.
\begin{algorithm}[h]
\caption{Howard's Algorithm: Policy Iteration}
\label{alg:policy_iteration}
\begin{algorithmic}[1]
\Require $A$
\Ensure $\lambda, x, \pi$
\State Initialize policy $\pi$ and compute eigenmode $(\eta,x)$ of $A^{\pi}$ by Algorithm \ref{alg:value_determin}
\While {the itartion is not terminated}
\State Improve $\pi$ by Algorithm \ref{alg:policy_improve}
\State Compute the updated eigenmode $(\eta,x)$ of $A^{\pi}$ by  Algorithm \ref{alg:value_determin}
\EndWhile
\State \Return $\lambda=\eta_1,x,\pi$
\end{algorithmic}
\end{algorithm}
It is proved in \cite{Howard} that the iterations always terminate in finitely many steps, and one iteration requires $O(|E|)$ time. Algorithm \ref{alg:policy_iteration} requires $O(n)$ space.

We use an implementation of Howard's algorithm which is described in the paper  \cite{Howard} as a software library. The source code and more information about this implementation can be found in this site \url{ http://www.cmap.polytechnique.fr/~gaubert/HOWARD2.html}.

\end{document}